\begin{document}

\newtheorem{theorem}{Theorem}[section]
\newtheorem{lemma}[theorem]{Lemma}
\newtheorem{proposition}[theorem]{Proposition}

\newtheorem{statement}[theorem]{Statement}
\newtheorem{conjecture}[theorem]{Conjecture}
\newtheorem{corollary}[theorem]{Corollary}

\newtheorem*{maintheorem}{Theorem A}

\newtheoremstyle{neosn}{0.5\topsep}{0.5\topsep}{\rm}{}{\bf}{.}{ }{\thmname{#1}\thmnumber{ #2}\thmnote{ {\mdseries#3}}}
\theoremstyle{neosn}
\newtheorem{remark}[theorem]{Remark}
\newtheorem{definition}[theorem]{Definition}
\newtheorem{example}[theorem]{Example}

\numberwithin{equation}{section}

\newcommand{\Aut}{\,\mathrm{Aut}\,}
\newcommand{\Inn}{\,\mathrm{Inn}\,}
\newcommand{\End}{\,\mathrm{End}\,}
\newcommand{\Out}{\,\mathrm{Out}\,}
\newcommand{\Hom}{\,\mathrm{Hom}\,}
\newcommand{\ad}{\,\mathrm{ad}\,}
\newcommand{\SLn}{$\rm sl(n)$}
\newcommand{\calL}{{\mathcal L}}
\renewenvironment{proof}{\noindent \textbf{Proof.}}{$\blacksquare$}
\newcommand{\GL}{\,\mathrm{GL}\,}
\newcommand{\SL}{\,\mathrm{SL}\,}
\newcommand{\PGL}{\,\mathrm{PGL}\,}
\newcommand{\PSL}{\,\mathrm{PSL}\,}
\newcommand{\PSO}{\,\mathrm{PSO}\,}
\newcommand{\SO}{\,\mathrm{SO}\,}
\newcommand{\Sp}{\,\mathrm{Sp}\,}
\newcommand{\PSp}{\,\mathrm{PSp}\,}
\newcommand{\UT}{\,\mathrm{UT}\,}
\newcommand{\Exp}{\,\mathrm{Exp}\,}
\newcommand{\Rad}{\,\mathrm{Rad}\,}
\newcommand{\charr}{\mathrm{char}\,}
\newcommand{\tr}{\mathrm{tr}\,}
\newcommand{\diag}{{\rm diag}}

\font\cyr=wncyr10 scaled \magstep1%

\def\Sha{\text{\cyr Sh}}

\title{\Sha-rigidity of Chevalley groups over local rings}

\keywords{Chevalley groups, local rings, locally inner endomorphisms, automorphisms} 
\subjclass[2020]{20G35}

\author{Elena Bunina, Boris Kunyavski\u\i }
\date{}
\address{Department of Mathematics, Bar--Ilan University, 5290002 Ramat Gan, ISRAEL}
\email{helenbunina@gmail.com}
\email{kunyav@gmail.com}

\thanks{Research of Boris Kunyavski\u\i{} was supported by the ISF grant 1994/20.
A part of this research was accomplished when he was visiting the IHES (Bures-sur-Yvette). Support of these institutions is gratefully acknowledged}

\begin{abstract}
We prove that every locally inner endomorphism of a Chevalley group (or its elementary subgroup) over a local ring with an
irreducible root system of rank $>1$ (with $1/2$ for the systems $\mathbf A_2$, $\mathbf F_4$, $\mathbf B_\ell$, $\mathbf C_\ell$
and with $1/3$ for the system~$\mathbf G_2$) is inner, so that all these groups are \Sha-rigid.
\end{abstract}

\maketitle

\section{Introduction}\leavevmode
Our main focus in the present paper is on a local--global invariant $\Sha(G)$ of a group $G$.
Here the Cyrillic letter \Sha (``Sha'') refers to one of the names of this invariant, the \emph{Shafarevich--Tate set}.

For a finite group~$G$, the invariant we are talking about was introduced by Burnside
(naturally, under a different name) as early as in 1911, in the second edition of his famous
book~\cite{BK21}. Soon enough, in the paper~\cite{BK22}, published in 1913, he constructed
the first example of~$G$ with nontrivial $\Sha(G)$. Since then, $\Sha (G)$ has been
rediscovered more than once, each time revealing some new features.

Our main result (Theorem A) asserts that for any Chevalley group $G$ over a local ring $R$  (under mild restrictions
on invertible elements of $R$) the set $\Sha (G)$ consists of one element. In~\cite{BK}, such groups were called
\Sha-\emph{rigid}.

We start with a cohomological definition of $\Sha (G)$ introduced by Takashi Ono~\cite{BK112}.

\begin{definition}[\cite{BK112}] \label{def:ono}
Let $G$ be a group acting on itself by conjugation, $(g, x)\mapsto  gxg^{-1}$,
and let $H^1(G, G)$ denote the first cohomology pointed set. The set of cohomology classes
becoming trivial after restriction to every cyclic subgroup of~$G$ is denoted by $\Sha (G)$ and
called the \emph{Shafarevich--Tate set} of~$G$.
\end{definition}

\begin{definition}
For the lack of a better term, we say that a group $G$ with one-element
Shafarevich--Tate set is \emph{\Sha-rigid}.
\end{definition}

This term will be explained later, after clarifying relationships with some rigidity
phenomena.
Note that \Sha-rigidity is often a crucial step to establishing important properties of~$G$,
or of a whole class of groups. On the other hand, groups with nontrivial $\Sha (G)$ often
provide interesting examples (or even allow one to refute long-standing conjectures). Some
instances will be given below.

\begin{remark}
The terminology of Definition~\ref{def:ono} originates in the prototype of $\Sha (G)$,
dating back to the 1950’s when it appeared in the context of a high-brow approach to
diophantine equations and has been remaining since then one of the favourite objects of
arithmetic geometers: given an algebraic group $A$ defined over a number field~$k$, $\Sha (A)$ is
defined as the set of cohomology classes $H^1(\Gamma, A(\overline k))$ (where the absolute Galois group
$\Gamma=Gal(\overline k/k)$ acts naturally on $\overline k$-points of~$A$) that become trivial after restriction to every
$\Gamma_v = Gal(\overline k_v/k_v)$, where $v$ runs over all places of~$k$. In the purely group-theoretic setting
as above, a much more down-to-earth description is available.
\end{remark}

The following important remark is due to Marcin Mazur, see~\cite{BK115}
(note that it appeared implicitly in an earlier paper by Chih-Han Sah \cite{Sah}).

\begin{remark}
A map $f \colon G\to G$ is a cocycle if and only if the map $g\colon G\to G$ defined
by $g(x) = f (x)x$ is an endomorphism. Furthermore, $f$ is a coboundary if and only if $g$ is
an inner automorphism, and the restriction of $f$ to the cyclic subgroup generated by $x\in G$
is a coboundary if and only if $g(x)$ is conjugate to~$x$. Denote by $\End_c(G)$ (resp. $\Aut_c(G)$)
the set of endomorphisms (resp. automorphisms) $g$ of~$G$ such that $g(x)$ is conjugate to~$x$
for all $x\in G$.
\end{remark}

We see that $G$ is \Sha-rigid if and only if it satisfies the following condition:
$$
\End_c(G) = \Inn(G).\eqno (*)
$$

\begin{remark}
Endomorphisms (or automorphisms) $g$ with the property that $g(x)$ is
conjugate to~$x$ for all $x\in G$ appear in the literature under different names: \emph{pointwise
inner}, \emph{conjugating}, \emph{class-preserving}, etc.

In this text they will be called \emph{locally inner}. Note
that any locally inner endomorphism is injective.
\end{remark}
Condition  $(*)$ is sometimes called \emph{Property}~E (see, e.g.,~\cite{BK5}).
Since $\Inn(G)\subseteq \Aut_c(G)\subseteq \End_c(G)$, it is convenient to subdivide the property of being
\Sha-rigid into two weaker ones:

(A) $\Inn(G)=\Aut_c(G)$;

(B) $\Aut_c(G)=\End_c(G)$.

The first property is sometimes referred to as \emph{Property}~A (see, e.g.,~\cite{BK57}).
It can be written down as $\Out_c(G)=1$ meaning that $G$ has no locally inner outer automorphisms.

In this text, a group $G$ satisfying~(A) will be called A-\emph{rigid}, and a group satisfying~(B) will be called
B-\emph{rigid}. In these terms, any \Sha-rigid group is both A-rigid and B-rigid, and vice versa.

There are numerous examples of A-, B-, and \Sha-rigid groups.
To convince the reader in the diversity of thereof, we reproduce
a list from \cite{BK} (see a survey \cite{Y2} and more recent papers \cite{KK}, \cite{Ra}
for more examples; Lie-algebraic analogues are discussed in \cite{Ku2}, \cite{KO}).

\begin{example} \label{ex1}
The following groups are B-rigid:
(1) finite;
(2) profinite;
(3) solvable;
(4) co-Hopfian.
\end{example}

\begin{example} \label{ex2}
The following finite groups are A-rigid:
(1) symmetric groups;
(2) simple groups;
(3) $p$-groups of order at most $p^4$;
(4) $p$-groups having a maximal cyclic subgroup;
(5) extraspecial and almost extraspecial $p$-groups;
(6) $p$-groups having a cyclic subgroup of index $p^2$;
(7) groups such that the Sylow $p$-subgroups are cyclic for odd~$p$, and either cyclic, or
dihedral, or generalized quaternion for $p=2$;
(8) Blackburn groups;
(9) abelian-by-cyclic groups;
(10) primitive supersolvable groups;
(11) unitriangular matrix groups over $\mathbb F_p$ and the quotients of their lower central
series;
(12) central products of A-rigid groups.
\end{example}

\begin{example} \label{ex3}
The following infinite groups are A-rigid:
(1) the absolute Galois group of~$\mathbb Q$;
(2) the absolute Galois group of $\mathbb Q_p$;
(3) non-abelian free groups;
(4) non-abelian free profinite groups;
(5) so-called pseudo-p-free profinite groups;
(6) free nilpotent groups;
(7) non-abelian free solvable groups;
(8) nontrivial free products;
(9) one-relator groups of the form $\langle a, b\mid [a^m, b^n] = 1\rangle$ , $m, n > 1$;
(10) non-abelian free Burnside groups of large odd exponent;
(11) fundamental groups of compact orientable surfaces;
(12) Artin braid groups $B_n$ and pure braid groups $P_n$;
(13) connected compact topological groups;
(14) fundamental groups of closed surfaces with negative Euler characteristic;
(15) non-elementary subgroups $H$ of hyperbolic groups~$G$ such that $H$ does not
normalize any nontrivial finite subgroup of~$G$;
(16) some groups of automorphisms and birational automorphisms of the plane and
space;
(17) unitriangular matrix groups over~$\mathbb Q$ and the quotients of their lower central series,
as well unitriangular matrix groups over~$\mathbb Z$;
(18) all finitely generated Coxeter groups.
\end{example}

\begin{example} \label{ex4}
The following  groups are \Sha-rigid:
(1) groups appearing in Example~\ref{ex1} simultaneously  with one of Examples~\ref{ex2} or \ref{ex3}.
(2) free groups;
(3) groups $\SL_n(R)$, $\PSL_n(R)$ and $\GL_n(R)$ where $R$ is a euclidean domain;
(4) free products of at least two nontrivial groups;
(5) amalgamated products $A*_H B$ where $H$ is a maximal cyclic subgroup of~$A$ and~$B$;
(6) all Fuchsian groups $G(n, r, s)$ except, possibly, triangle groups $G(0, 0, 3)$;
(7) almost all orientable Seifert groups, except possibly $G_1(0, 3)$ and $G_1(1, 1)$;
(8) some ``polygonal''  and ``tree'' products;
(9) the Cremona group of birational automorphisms of the complex projective
plane;
(10) all torsion-free hyperbolic groups;
(11) right angled Artin groups.
\end{example}

As to groups that are not \Sha-rigid, one can find many such examples among finite $p$-groups. 
As mentioned above, the first such example was discovered by Burnside. The smallest non-\Sha-rigid 
group is of order 32, see \cite{WalG}. To give the reader some feeling of the situation for 
infinite groups, we present below a ``generic'' construction of non-\Sha-rigid groups, which provides 
an additional argument in favour of our choice of the name for locally inner endomorphisms. The idea 
is attributed to Passman, see \cite[Introduction]{Sah}. 

\begin{example}
Let $G={\mathrm{FSym}}(\Omega )$ be a finitary symmetric group (the group of all permutations of an infinite set $\Omega$
fixing all but finitely many elements of $\Omega$). Viewing $G$ as a subgroup of the symmetric group ${\mathrm{Sym}}(\Omega)$, 
consider an automorphism $\varphi\colon G\to G$ induced by conjugation by some $a\in {\mathrm{Sym}}(\Omega)\setminus {\mathrm{FSym}}(\Omega )$. 
Clearly, $\varphi$ is almost inner but not inner, hence $G$ is not A-rigid, thus not \Sha-rigid. 
\end{example}

With an eye to extending the list of Example \ref{ex4}, the following conjecture
was proposed in~\cite{BK}.

\begin{conjecture} \label{conj:main}
 The following groups $G$ are \Sha-rigid:

\emph{(1)} $G=G(k)$, the group of $k$-points of a split simple Chevalley group $G$ defined over a
sufficiently large field~$k$;
\emph{(2)} the same as in~\emph{(1)}, with $k$ replaced with some ``good'' ring;
\emph{(3)} the same as in~\emph{(1)}, with any isotropic $k$-group~$G$;
\emph{(4)} the same as in~\emph{(1)}, with $G$ an anisotropic group splitting over a quadratic extension of~$k$;
\emph{(5)} $G\subset G(k)$ is a ``big'' subgroup possessing some rigidity properties in the sense of
Mostow, Margulis, and others;
\emph{(6)} $G$ is a split Kac--Moody group over a sufficiently large field~$k$.
\end{conjecture}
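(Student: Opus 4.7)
\medskip
\noindent\textbf{Proof proposal.} I focus on part~(2), the Chevalley-group-over-local-ring case, which is precisely what the abstract announces as Theorem~A and which subsumes part~(1) when the ``good'' ring is a field; parts~(3)--(6) involve substantially different machinery and I would not attempt them here. Let $G=G(\Phi,R)$ with $\Phi$ irreducible of rank~$>1$ and $R$ local (with $1/2$ or~$1/3$ inverted as stipulated), and let $E=E(\Phi,R)$ be its elementary subgroup. By Mazur's translation, it suffices to show that every locally inner endomorphism $\varphi$ of $G$, or of $E$, is inner.

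The core plan is to normalize $\varphi$ on the root subgroups $X_\alpha=\{x_\alpha(t):t\in R\}$ in three stages. First, since $\varphi(x_\alpha(t))$ is conjugate to $x_\alpha(t)$, the classification of root-unipotent conjugacy classes (lifted from the residue field) forces $\varphi(X_\alpha)$ to be a root subgroup of the same geometric type. After conjugating $\varphi$ by representatives of suitable Weyl-group and torus elements, one may assume $\varphi(X_\alpha)=X_\alpha$ for every simple root $\alpha$ and write $\varphi(x_\alpha(t))=x_\alpha(\chi_\alpha(t))$ with additive maps $\chi_\alpha\colon R\to R$. Second, apply $\varphi$ to the Chevalley commutator formulas for pairs $\alpha,\beta$ with $\alpha+\beta\in\Phi$; matching coefficients, and using rank $\ge 2$ so that enough such pairs exist, forces all $\chi_\alpha$ to be controlled by a single ring endomorphism $\sigma\colon R\to R$. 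The mixed-degree terms $t^is^j x_{i\alpha+j\beta}$ appearing in the multiply-laced commutators are exactly what generate the invertibility hypotheses on $2$ and~$3$. Third, applying local innerness on semisimple elements of the split torus pins $\sigma$ down to the identity, using locality of $R$ to rule out nontrivial candidates. Thus $\varphi|_E=\mathrm{id}$, hence $\varphi$ is inner on~$E$; since $E$ is normal in $G$ with trivial centralizer modulo the centre, the conjugating element extends to witness inner-ness of $\varphi$ on all of $G$.

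The main obstacle I expect is the second step. Local inner-ness only gives information one cyclic subgroup at a time, so the joint behavior of $\varphi$ on two independent root subgroups $X_\alpha$ and $X_\beta$ is not directly controlled; the Chevalley commutator identities are what knit the separate $\chi_\alpha$ together into a single~$\sigma$, and this is where the combinatorics of each root system, together with the small-prime invertibility assumptions, come into play. The $\mathbf A_2$ case is the most delicate among the simply-laced systems because extracting the compatible ring structure from $[x_\alpha(t),x_\beta(s)]=x_{\alpha+\beta}(\pm ts)$ under only conjugacy-level data requires $1/2$ to disentangle mixed contributions; analogous but heavier bookkeeping, organized by the depth of root chains, handles $\mathbf B_\ell$, $\mathbf C_\ell$, $\mathbf F_4$, and the triple-chain system~$\mathbf G_2$.
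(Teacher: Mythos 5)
Your outline diverges fundamentally from the paper's proof, and two of its three steps conceal genuine gaps. The paper does not normalize $\varphi$ on root subgroups by hand; it first invokes a bi-interpretability result (\cite{Bu-bi}) to show that the image $\varphi(G)$ is itself a Chevalley group $G(\Phi,S)$, then applies the already-established classification of \emph{isomorphisms} of Chevalley groups over local rings (Theorem~\ref{standard_isom}) to both factors of $\varphi$, obtaining the standard decomposition $\rho\circ\Lambda\circ\delta\circ d\circ i_g$ for free. Your first step --- that local innerness forces $\varphi(X_\alpha)$ to be a root subgroup which, after a single conjugation, can be made to coincide with $X_\alpha$ for \emph{all} simple $\alpha$ simultaneously --- is precisely the content of that classification. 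Local innerness gives only that each individual element $\varphi(x_\alpha(t))$ is conjugate to $x_\alpha(t)$ by a conjugator depending on $t$; it does not give a coherent conjugator for the whole subgroup $X_\alpha$, let alone for all root subgroups at once. Recovering this is the hard part (it occupies a series of papers of the first author), and your proposal assumes it rather than proving it. Without either that normalization or the bi-interpretability detour, your second step (knitting the $\chi_\alpha$ into one ring endomorphism via commutator formulas) has nothing to act on.

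Your third step also fails as stated: local innerness on semisimple torus elements alone cannot pin $\sigma$ down to the identity, because $h_\alpha(t)$ is already conjugate to $h_\alpha(t^{-1})$ by a Weyl-group representative, so the candidate $\sigma(t)=t^{-1}$ survives any test using only elements of $T$. This is exactly why the paper uses semisimple elements only to prove \emph{surjectivity} of $\rho$ (Lemma~\ref{lem:ring}, where one deduces that $\rho$ permutes the eigenvalue set $\{1,t,t^{\pm1},t^{\pm2}\}$), and then kills $\rho$ by testing mixed elements of the form $h_{\alpha_1}(s)\,x_{\alpha_1}(1)\cdots x_{\alpha_\ell}(1)$ via the Gauss decomposition (Lemma~\ref{lem:ring-id}); the unipotent part rigidifies the conjugator enough to exclude the inversion. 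A correct writeup along the paper's lines also needs the separate arguments that the graph automorphism is trivial (reduction to the residue field plus Bruhat decomposition), that the diagonal automorphism is inner, and that the central automorphism is trivial on $G=TE$ --- none of which your extension from $E$ to $G$ addresses.
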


In this paper we prove assertions (1), (2) and partially (5), taking local rings as ``good'' rings.
Here is our main theorem.

\begin{maintheorem}\label{main_theorem}
Let $\Phi$ be a reduced irreducible root system of rank $>1$, $R$ be a local ring
with $1/2$ for $\Phi=\mathbf A_2, \mathbf B_\ell, \mathbf C_\ell, \mathbf F_4$
and with $1/3$ for $\Phi=\mathbf G_2$. Then any Chevalley group $G(\Phi,R)$
is \Sha-rigid, and so is its elementary subgroup $E(\Phi,R)$.
\end{maintheorem}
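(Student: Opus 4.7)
By Mazur's observation recorded above, it suffices to prove that every locally inner endomorphism $\varphi$ of $E(\Phi,R)$ and of $G(\Phi,R)$ is inner. Since $\varphi$ preserves $G$-conjugacy classes, it sends unipotents to unipotents, and in particular each root element $x_\alpha(t)$ is mapped to an element conjugate to itself, hence lying in a $G$-conjugate of the root subgroup $X_\alpha$. My strategy is to modify $\varphi$ successively by inner automorphisms until, after the accumulated modification, the resulting endomorphism is the identity on a Chevalley generating set; the original $\varphi$ is then inner.

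The first and most delicate step is a simultaneous normalization of all simple root subgroups. Starting with a simple root $\alpha_1$, one replaces $\varphi$ by its composition with conjugation by a suitable element so that $\varphi(X_{\alpha_1})\subseteq X_{\alpha_1}$. Next, using the Chevalley commutator relations between $X_{\alpha_1}$ and a non-orthogonal simple root subgroup $X_{\alpha_2}$ (available because $\mathrm{rk}\,\Phi>1$), one conjugates $\varphi$ further inside the normalizer of $X_{\alpha_1}$ to arrange $\varphi(X_{\alpha_2})\subseteq X_{\alpha_2}$. Iterating yields $\varphi(X_{\alpha_i})\subseteq X_{\alpha_i}$ for every simple root, giving maps $\sigma_i\colon R\to R$ with $\varphi(x_{\alpha_i}(t))=x_{\alpha_i}(\sigma_i(t))$. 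Once this normalization is in place, the locally inner hypothesis together with the Chevalley commutator formula
\[
[x_\alpha(t),x_\beta(u)]=\prod_{k,\ell>0}x_{k\alpha+\ell\beta}(N_{k,\ell}\,t^k u^\ell)
\]
forces each $\sigma_i$ to be additive and ultimately to be the identity on $R$. The invertibility hypotheses enter precisely here: the integer constants $N_{k,\ell}$ lie in $\{\pm 1,\pm 2,\pm 3\}$, and inverting the $\pm 2$'s that appear for the double-bonded systems $\mathbf B_\ell,\mathbf C_\ell,\mathbf F_4$, and the $\pm 3$'s for $\mathbf G_2$, is exactly what allows one to solve for $\sigma_i$; the $\mathbf A_2$ case requires $1/2$ in order to separate out the $h_\alpha$-type contribution appearing when evaluating $[x_\alpha(t),x_{-\alpha}(u)]$. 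After these reductions, $\varphi$ becomes the identity on a generating set of $E(\Phi,R)$, so the original $\varphi$ is inner on $E(\Phi,R)$.

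To pass from $E(\Phi,R)$ to $G(\Phi,R)$, I would use that $E(\Phi,R)$ is normal in $G(\Phi,R)$ over a local ring and that $G(\Phi,R)$ is generated by $E(\Phi,R)$ together with the standard split maximal torus $T(R)$; the locally inner condition on $T(R)$ matches up with the inner action already secured on $E(\Phi,R)$ to yield a single conjugating element of $G(\Phi,R)$. The main obstacle in this plan is the simultaneous normalization of the simple root subgroups in the first step: the locally inner hypothesis delivers a conjugator for each individual element, but combining these into a single global conjugator compatible with every root subgroup genuinely requires the commutator structure, which is the reason the hypothesis $\mathrm{rk}\,\Phi>1$ is essential—in rank one the plan fails, matching the known failure of \Sha-rigidity there. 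The small-rank multiple-bond cases ($\mathbf A_2$, the doubly-bonded systems, $\mathbf G_2$) are precisely where the invertibility of $2$ or $3$ is indispensable, reflecting the more degenerate shape of the commutator constants in those types.
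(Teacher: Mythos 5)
Your proposal is a strategy outline rather than a proof, and the step you yourself flag as ``the main obstacle'' --- passing from the pointwise conjugators supplied by local innerness to a single inner automorphism normalizing all the simple root subgroups at once --- is precisely the content that cannot be waved at. That simultaneous normalization is essentially equivalent to the classification of automorphisms of Chevalley groups over local rings, which is the deep external input the paper imports (the standardness theorem for isomorphisms, Theorem~\ref{standard_isom}, together with a bi-interpretability result to reduce an injective endomorphism to a composition of an isomorphism and a standard one). Moreover, a locally inner endomorphism is only known to be injective, not surjective, so your normalization would have to be carried out for a map whose image is an a priori unknown subgroup; the paper needs a separate argument (conjugacy of the diagonal elements $h_\alpha(t)$ and $h_{\delta(\alpha)}(\rho(t))$ forces the ring map to be onto) just to rule this out. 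Without supplying the normalization argument, the proof does not exist.

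There is a second genuine gap even if the normalization $\varphi(x_{\alpha_i}(t))=x_{\alpha_i}(\sigma_i(t))$ is granted: the Chevalley commutator formula can at best force the $\sigma_i$ to agree with a single ring endomorphism $\rho$ of $R$, and it can never force $\rho=\mathrm{id}$, because every ring automorphism of $R$ yields a non-identity family $\sigma_i=\rho$ satisfying all commutator relations. Killing $\rho$ requires re-applying local innerness to elements mixing the torus and the unipotent part (the paper uses $g=h_{\alpha_1}(s)x_{\alpha_1}(1)\cdots x_{\alpha_\ell}(1)$ together with the Gauss/Bruhat decompositions), and separate arguments are needed to exclude a graph automorphism and to show that the residual diagonal automorphism is inner and the central one trivial on $G$ (not just on $E$). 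Two smaller inaccuracies: for $\mathbf A_2$ all structure constants are $\pm1$, so the hypothesis $1/2\in R$ does not come from solving commutator relations but from the classification theorem being invoked; and the paper does not assert that \Sha-rigidity is known to fail in rank one --- it only says the rank-one case is harder and can be handled for local Euclidean domains by other means.
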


The proof is contained in Section \ref{sec:proof}. It heavily relies on the classification
of automorphisms obtained by the first author in a series of papers, see~\cite{Bunina_main}
and the references therein.

\begin{remark}
The restriction on the rank can be lifted in the case where $R$ is a local Euclidean domain,
thanks to the results of Ono reproduced (in greater generality) in \cite{Wa1}, \cite{Wa2}.
\end{remark}

\begin{remark}
In the case where $R$ is a {\it finite field}, the assertion of Theorem A,
without any restrictive assumptions, follows from
the Feit--Seitz theorem on the A-rigidity of finite simple groups \cite{FS}, proved
using the classification of thereof. If $R$ is an {\it arbitrary field}, Theorem A is
new, to the best of our knowledge (though it can perhaps be deduced from Steinberg's
classical results).
\end{remark}

In the next section we collect the needed preliminaries on Chevalley groups, particularly over local rings.
Section \ref{sec:proof} contains the proof of Theorem A. In Section \ref{sec:final} we discuss some open problems
and eventual generalizations.

\section{Chevalley groups over local rings}\leavevmode \label{sec:Chev}
First fix some notation. It is standard and mainly follows
\cite{v43} and \cite{VavPlotk1} where the reader can find additional details and further references
on Chevalley groups over rings.

Given a reduced irreducible root system $\Phi$
and a commutative local ring~$R$ with~$1$,
we assume throughout that for the root systems ${\mathbf A}_2$, ${\mathbf B}_\ell$, ${\mathbf C}_\ell$ and ${\mathbf F}_4$
the ring $R$ contains $1/2$ and for ${\mathbf G}_2$ it contains $1/3$.

We consider an arbitrary Chevalley group $G_{\mathcal P}(\Phi,R)$
constructed by $\Phi$, $R$ and a lattice $\mathcal P(\Phi)$ lying between
the root lattice $Q(\Phi)$ and weight lattice $P(\Phi)$. In the extreme cases
$\mathcal P=P$ or $\mathcal P=Q$ we say that $G_{\mathcal P}(\Phi,R)$ is
simply connected or adjoint and denote it by $G_{\mathrm{sc}}(\Phi,R)$ or
$G_{\mathrm{ad}}(\Phi,R)$, respectively.

Whenever any of $\Phi$, $R$ and $\mathcal P$ is fixed, we will omit it in the notation;
thus we often shorten $G_{\mathcal P}(\Phi,R)$ to $G$. The same abuse of notation will be applied
to subgroups of $G$ if this does not lead to any confusion.


We fix a split maximal torus $T=T(\Phi,R)$ in $G=G(\Phi,R)$
and identify $\Phi$ with $\Phi(G,T)$. This choice
uniquely determines the unipotent root subgroups, $X_{\alpha}$,
$\alpha\in\Phi$, in $G$, elementary with respect to $T$. As usual,
we fix maps $x_{\alpha}\colon R\mapsto X_{\alpha}$, so that
$X_{\alpha}=\{x_{\alpha}(t)\mid t\in R\}$, and require that
these parametrizations satisfy the Chevalley commutator formula
with integer coefficients, see \cite{Carter}, \cite{Steinberg}.
The above unipotent elements $x_{\alpha}(t)$, where $\alpha\in\Phi$, $t\in R$,
elementary with respect to $T(\Phi,R)$, are called
elementary root unipotents.


Further,
$$ E(\Phi,R)=\big\langle x_\alpha(t),\ \alpha\in\Phi,\ t\in R\big\rangle $$
\noindent
denotes the elementary subgroup of $G(\Phi,R)$,
spanned by all elementary root unipotents, or, what is the
same, by all root subgroups $X_{\alpha}$,
$\alpha\in\Phi$.


If $R$ is an algebraically closed field, then
$$
G_{\mathcal P} (\Phi,R)=E_{\mathcal P}(\Phi,R)
$$
for any lattice~$\mathcal P$. This equality is not always true even $R$
is a non-algebraically closed field.
However if $G$ is simply connected 
and $R$ is \emph{semilocal}
(i.e., contains only finitely many maximal ideals), then we have
$$
G_{sc}(\Phi,R)=E_{sc}(\Phi,R),
$$
see, e.g.,  \cite{M}. 
If, however, $G$ is not simply connected,
$G_{\mathcal P}(\Phi,R)=E_{\mathcal P}(\Phi,R)T_{\mathcal P}(\Phi,R)$ (see~
\cite{M}). 

If $\Phi$ is irreducible of rank $\ell\geqslant
2$, then the Suslin--Kopeiko--Taddei theorem says that $E(\Phi,R)$ is always normal in $G(\Phi,R)$ (see~\cite{v41} and the references therein).
In the case $R$ is local
with~$1/2$ for $\mathbf B_\ell, \mathbf C_\ell, \mathbf F_4$ and with $1/3$ for~$\mathbf G_2$, it is known (see, e.g., \cite{Vas})  that
$$
[G(\Phi,R),G(\Phi,R)]=[E(\Phi,R),E(\Phi,R)]=E(\Phi,R).
$$

However, for $\ell=1$ the subgroup of elementary matrices
$E_2(R)=E_{sc}(\mathbf A_1,R)$ is not necessarily normal in the special linear
group $\SL_2(R)=G_{sc}(\mathbf A_1,R)$ (the first example is due to Paul Cohn~\cite{Cn}). 

The elementary adjoint Chevalley group $E_{\mathrm{ad}} (\Phi, R)$ is always the quotient group of $E_{\mathcal P}(\Phi, R)$ by its centre.


Choosing an order on $\Phi$,
denote by $\Phi^+$ the set of positive roots. Set $m=|\Phi^+|$.
The set of
simple roots is denoted by~$\Delta$. The subgroup
$$
U=U(G)=U(\Phi,R)
$$
is generated by the elements $x_\alpha(t)$, $\alpha\in
\Phi^+$, $t\in R$. Similarly, the subgroup $V=V(G)=V(\Phi,R)$ is generated by the elements
$x_{-\alpha}(t)$, $\alpha\in \Phi^+$, $t\in R$.

For any order of the positive roots $\alpha_1,\dots, \alpha_m$, all elements $g_1\in U(G)$ and $g_2\in V(G)$ are uniquely represented in the form
$$
g_1=x_{\alpha_1}(a_1)\dots x_{\alpha_m}(a_m)\text{ and } g_2=x_{-\alpha_1}(b_1)\dots x_{-\alpha_m}(b_m),\text{ where }a_1,\dots, a_m,b_1,\dots, b_m\in R.
$$

We will usually order the positive roots by their \emph{height}: start with simple roots $\alpha_1,\dots, \alpha_{\ell}$, then put sums of two simple roots, etc.

For  any $t\in R^*$ and any $\alpha\in \Phi$  we denote
$$
w_\alpha (t):=x_\alpha(t)x_{-\alpha}(-t^{-1}) x_\alpha (t)\text{ and }
h_\alpha(t):=w_\alpha(t)w_\alpha(1)^{-1}.
$$
For the group $H=H(G)=H(\Phi,R)$ generated by the $h_\alpha(t)$, $\alpha\in
\Phi$, $t\in R^*$, we have
$$
H(\Phi,R)=T(\Phi,R)\cap E(\Phi,R).
$$
Any element $h\in H$ is a product $h_{\alpha_1}(t_1)\dots h_{\alpha_\ell}(t_\ell)$, where $\alpha_1,\dots, \alpha_\ell$ are simple roots, $t_1,\dots, t_\ell\in R^*$.

The normalizer $N$ of  $H$ in $E$ is generated by all $w_\alpha(t)$, $\alpha\in \Phi$, $t\in R^*$, and its quotient $\mathbf W:=N/H$ is isomorphic to the Weyl group of $\Phi$.
The elements $w_{\alpha}(1)$ will be denoted by~$\mathbf w_\alpha$ and sometimes identified with the reflections $w_\alpha$ from the Weyl group.

If $R$ is a field, the groups $G(\Phi,R)$ and $E(\Phi,R)$ admit the \emph{Bruhat decomposition}: any $g\in G$ is uniquely represented as a product
\begin{equation} \label{Bruhat}
g=t x_{\alpha_1}(a_1)\dots x_{\alpha_m}(a_m)\mathbf w x_{\alpha_{i_1}}(b_1)\dots x_{\alpha_{i_k}}(b_k), \text{ where }  t\in T(G), \mathbf w\in \mathbf W, a_1,\dots, b_k \in R,
\end{equation}
and for any $j=1,\dots, k$ we have $\mathbf w (\alpha_{i_j})\in \Phi^-$. If $g\in E$, we have $t\in H$.

If $R$ is a local ring which is not a field, we use the \emph{Gauss decomposition}: any $g\in G$ can be represented as
\begin{equation}
g= t u_1 v u_2,\text{ where } t\in T, u_1,u_2\in U, v\in V.
\end{equation}
For $g\in E$ we have $t\in H(G)$. This decomposition is not unique.

We will also heavily use the standard description of automorphisms and isomorphisms of Chevalley groups over local rings.

Let us describe some types of automorphisms of Chevalley groups $G(\Phi,R)$ and $E(\Phi, R)$, which are called (\emph{basic}) \emph{standard}:

\medskip

{\bf 1.} For $x\in G(\Phi,R)$ or $x\in E(\Phi,R)$, the conjugation $i_x\colon g\mapsto xgx^{-1}$ is called an \emph{inner} automorphism of the corresponding Chevalley group.

\medskip

{\bf 2.} For $d_1,\dots, d_\ell\in R^*$ there is a unique automorphism $d$ of $G(\Phi,R)$  such that $d\colon x_{\alpha_i}(t) \mapsto x_{\alpha_i}(d_i\cdot t)$ for all $i=1,\dots, \ell$ and $t\in R$. This automorphism is called \emph{diagonal}.

It corresponds to the conjugation by an element $t\in T(\Phi,S)$ where a ring $S$ is an extension of~$R$. Thus a diagonal automorphism may be inner.

\medskip

{\bf 3.} If $\delta$ is an automorphism of the Dynkin diagram of~$\Phi$ (a substitution of simple roots preserving their lengths and angles between them),
then $\delta$ can be uniquely extended up to an automorphism $\delta$ (we will use the same letters for both of them) of $G(\Phi,R)$ or $E(\Phi,R)$
such that $\delta (x_{\alpha_i}(t))=x_{\delta(\alpha_i)}(t)$ for all $i=1,\dots, \ell$ and $t\in R$. This automorphism $\delta$ is called a \emph{graph} automorphism.

Non-trivial graph automorphisms exist only for the root systems $\mathbf A_\ell$, $\ell\geqslant 2$, $\mathbf D_\ell$, $\ell \geqslant 4$, and~$\mathbf E_6$.
For $\mathbf A_\ell$ the only non-trivial $\delta$ swaps the simple roots $\alpha_i$ and $\alpha_{\ell+1-i}$, $i=1,\dots \ell$.
For $\mathbf D_4$ a graph automorphism can realize any substitution of the roots $\alpha_1,\alpha_3,\alpha_4$.
For $\mathbf D_\ell$, $\ell\geqslant 5$, the only non-trivial $\delta$ swaps the roots $\alpha_{\ell-1}$ and $\alpha_\ell$ and does not move other simple roots.
Finally, for $\mathbf E_6$ this $\delta$ swaps $\alpha_1$ and $\alpha_6$, $\alpha_2$ and $\alpha_5$ and does not move $\alpha_3$ and $\alpha_4$.

\medskip

{\bf 4.} If $\rho$ is an automorphism of the ring~$R$, then the self-map of $G(\Phi,R)$ or $E(\Phi,R)$
taking $x_\alpha(t)$ to $x_\alpha(\rho(t))$ is a group automorphism. We call it a \emph{ring} automorphism
and denote by the same letter~$\rho$. It is clear that in any representation of $G$ it acts
by taking the matrix $(a_{ij})$ to $(\rho(a_{ij}))$.

More generally, given two Chevalley groups $G(\Phi,R)$ and $G(\Phi,S)$ (or $E(\Phi,R)$ and $E(\Phi,S)$),
an isomorphism (or even a homomorphism) of rings $\rho\colon R\to S$ induces a \emph{ring isomorphism} (or \emph{ring homomorphism})
of Chevalley groups $\rho\colon G(\Phi,R)\to G(\Phi,S)$ (or $\rho\colon E(\Phi,R)\to E(\Phi,S)$).

\medskip

{\bf 5.} Any automorphism $\Gamma$ of the Chevalley group $G$ such that  for all $g\in G$ we have
$$
\Gamma (g) =\lambda_g \cdot g,\text{ where } \lambda_g \in Z(G),
$$
is called \emph{central}. On the elementary subgroup $E$ it is identical since $E$ coincides with the commutator subgroup of $G$:
$$
\Gamma([g_1,g_2])=[\lambda_{g_1}g_1,\lambda_{g_2} g_2]=[g_1,g_2].
$$

\medskip

\begin{definition} \label{def:stand}
An automorphism of a Chevalley group $G(\Phi,R)$ or of its elementary subgroup $E(\Phi,R)$ over a local ring~$R$ is called \emph{standard}
if it is a composition of inner, diagonal, graph, ring and central automorphisms.

Similarly, an isomorphism between Chevalley groups $G(\Phi,R)$ and $G(\Phi,S)$ (or their elementary subgroups) over local rings $R$ and~$S$
is called \emph{standard} if it is a composition of inner, diagonal, graph, and central automorphisms of $G(\Phi,R)$ and a ring isomorphism between $G(\Phi,R)$ and $G(\Phi,S)$.
\end{definition}



Our main tool in the proof below is the following theorem.
It was obtained in a series of papers of the first author, see \cite{Bunina_main},~\cite{Bunina_recent}
and the references therein.

\begin{theorem}\label{standard_isom}
If $\Phi$ is an irreducible root system of rank $> 1$, $R$ and $S$ are  local rings $($with $1/2$ for $\Phi=\mathbf A_2, \mathbf B_\ell, \mathbf C_\ell, \mathbf F_4$ and with $1/3$ for $\mathbf G_2)$, then any isomorphism between $G(\Phi,R)$ and $G(\Phi,S)$  $($or between  $E(\Phi,R)$ and $E(\Phi,S))$ is standard.
\end{theorem}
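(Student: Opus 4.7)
The plan is to reduce an arbitrary isomorphism $\varphi\colon G(\Phi,R)\to G(\Phi,S)$ to one that preserves the standard Chevalley data, by successively ``peeling off'' the five basic types of standard maps until only the identity remains. I would first verify the result at the level of the elementary subgroups and then bootstrap to $G$ using the decomposition $G=ET$ and the description of the central factor.

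First, I would analyze the image of a split maximal torus. The idea is to characterize $T$ (or, more tractably, its intersection $H$ with $E$) intrinsically: it is the maximal ``algebraic torus'' that normalizes a system of root subgroups and acts on them by characters. Using the Gauss/Bruhat decomposition recalled in the excerpt and the abundance of invertible elements in $R$ and $S$, I would show that $\varphi(H)$ is again a split maximal torus of the right Chevalley group, after possibly composing with an inner automorphism (this is where the assumptions on $1/2$ and $1/3$ enter, through the usual commutator identities and the normality of $E$ guaranteed by Suslin--Kopeiko--Taddei). The key auxiliary observation is that semisimple ``regular'' elements $h_{\alpha_1}(t_1)\cdots h_{\alpha_\ell}(t_\ell)$ with $t_i$ in sufficiently general position are conjugacy-distinguishable by their centralizer structure, so that the image of $H$ is conjugate to $H$.

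Next, having arranged $\varphi(H)=H$, I would study the action of $\varphi$ on the root subgroups $X_\alpha$. Because each $X_\alpha$ is a $T$-eigenspace inside the normalizer of $U$, the image $\varphi(X_\alpha)$ must itself be a root subgroup $X_{\sigma(\alpha)}$ for some bijection $\sigma$ of $\Phi$. The Chevalley commutator formula then forces $\sigma$ to respect the root system structure; for a Chevalley group it is known that any such $\sigma$ is the composition of a Weyl element (absorbed into an inner automorphism via $\mathbf w_\alpha$'s) and a symmetry of the Dynkin diagram, i.e., a graph automorphism. After composing $\varphi$ with the inverse of this graph automorphism, one may assume $\varphi(X_\alpha)=X_\alpha$ for every $\alpha\in\Phi$.

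At this stage, for each $\alpha\in\Phi$ the map $\varphi$ induces a bijection $\rho_\alpha\colon R\to S$ via $x_\alpha(t)\mapsto x_\alpha(\rho_\alpha(t))$. Additivity of $\rho_\alpha$ comes from $x_\alpha(t+s)=x_\alpha(t)x_\alpha(s)$, and multiplicativity from the identities $w_\alpha(u)x_{\alpha}(t)w_\alpha(u)^{-1}=x_{-\alpha}(-u^{-2}t)$ together with the commutation relations, yielding that $\rho_\alpha$ is a ring isomorphism $R\to S$; the commutator formula and the Weyl-group conjugation relating $\alpha$'s then force all $\rho_\alpha$ to coincide with a single ring isomorphism $\rho$, up to a diagonal automorphism that records the constants appearing. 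Composing with $\rho^{-1}$ and the appropriate diagonal automorphism reduces $\varphi$ to the identity on $E(\Phi,R)$; extending from $E$ to $G=ET$ and using the equality $[G,G]=E$ in the local case (recalled in Section~\ref{sec:Chev}) shows that the residual automorphism of $G/E\hookrightarrow T/H$ is central, completing the proof.

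The hard part is the first step: showing that, after composing with an inner automorphism, the split maximal torus is preserved, and, relatedly, that unipotent root subgroups go to conjugates of unipotent root subgroups. Over a field this follows from classical Borel--Tits arguments about abstract homomorphisms of algebraic groups, but over a local ring with a possibly non-reduced residue field one has to work inside the group itself, exploiting both the Gauss decomposition and an induction over the rank using Levi-type subsystems stabilized by the invertibility assumptions on $2$ and $3$. Once this geometric step is in place, the rest is a systematic bookkeeping via commutator identities that is already essentially contained in Steinberg-type arguments recycled for the local setting in \cite{Bunina_main},~\cite{Bunina_recent}.
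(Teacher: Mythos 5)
First, a point of order: the paper does not prove Theorem~\ref{standard_isom} at all. It is imported wholesale from the first author's earlier work (\cite{Bunina_main}, \cite{Bunina_recent} and the references therein) and used as a black box in Section~\ref{sec:proof}. So there is no ``paper's own proof'' to match your argument against; what can be assessed is whether your sketch would constitute a proof, and it would not in its present form.

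The genuine gap is exactly where you say ``the hard part'' is, and acknowledging a gap does not close it. Everything after your first step is standard Steinberg-type bookkeeping (additivity from $x_\alpha(t+s)=x_\alpha(t)x_\alpha(s)$, multiplicativity from the $w_\alpha(u)$-conjugation relations, coherence of the $\rho_\alpha$ via the commutator formula), but it all rests on first knowing that, after an inner twist, $\varphi$ carries $H$ to $H$ and each $X_\alpha$ to some $X_{\sigma(\alpha)}$. Your justification for this is circular or absent: you characterize $X_\alpha$ as ``a $T$-eigenspace inside the normalizer of $U$,'' which presupposes that $\varphi$ already respects $T$ and $U$ --- precisely what is to be shown; and the claim that regular semisimple elements are ``conjugacy-distinguishable by their centralizer structure'' is an assertion about an abstract isomorphism that need not preserve any algebraic or topological structure, so it requires proof. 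Over a local ring the actual argument in the cited works is of a different nature: one reduces modulo the radical, invokes the classification over the residue field, and then lifts, typically by analyzing images of specific torsion elements (such as the involutions $h_\alpha(-1)$, which is where the hypotheses on $1/2$ and $1/3$ really enter) and by working in the adjoint representation to show that the conjugating object normalizes the Lie algebra structure. None of this machinery appears in your outline, so the central step remains unproven. If your goal was only to record how the theorem is used in this paper, you should simply cite it as the authors do; if your goal was to reprove it, the first step needs an actual argument.
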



\section{Proof of Theorem A} \label{sec:proof}

The general idea of the proof is to represent a given locally inner endomorphism in the
standard form, similarly to Theorem \ref{standard_isom}, prove that any ring homomorphism appearing in this
standard decomposition is an isomorphism, and then show that any diagonal automorphism is inner whereas
any graph, ring or central automorphism is identical. The proof is subdivided accordingly for the reader's
convenience. The subsections appearing below follow this subdivision.

Apart from Theorem  \ref{standard_isom}, our main tools are the Gauss and Bruhat decompositions. The first
is valid over the original ring $R$, and to apply the second, which holds for Chevalley groups over fields,
we go over to the group over the residue field and then use some lifting argument.

\subsection{Standard form of a locally inner endomorphism}\leavevmode
Throughout below our standing assumptions are that a
local ring $R$, a root system $\Phi$ and a Chevalley group $G(\Phi,R)$ 
satisfy all the conditions of Theorem~A.

Let $\varphi$ be a locally inner endomorphism of $G$ (or of its elementary subgroup $E$).

The following lemma can be viewed as an analogue of  Theorem~\ref{standard_isom}.

\begin{lemma} \label{lem:comp}
With the notation as above, $\varphi$ can be decomposed
into a composition of an inner, diagonal, graph and central automorphisms,
and a ring endomorphism of $G$ (or $E$):
\begin{equation} \label{eq:comp}
\varphi=\rho \circ \Lambda \circ \delta \circ d\circ i_g.
\end{equation}
\end{lemma}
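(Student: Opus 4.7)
My plan is to adapt, step by step, the argument used to prove Theorem \ref{standard_isom} to the endomorphism setting. The first observation, already recorded in the paper, is that since $\varphi$ is locally inner, it is injective; hence $\varphi$ is an isomorphism onto its image $\varphi(G) \subseteq G(\Phi,R)$. The extraction procedure that produces the standard decomposition in Theorem \ref{standard_isom} reads off each component (inner, diagonal, graph, central, ring) from the images of specific generating elements---root unipotents, elements of the split torus, and lifts of Weyl group representatives---and from the commutator relations they satisfy. This procedure never really uses surjectivity of the overall group map; the underlying ring map is recovered a posteriori as whatever map makes the residual automorphism act ``coefficient-wise'' on root subgroups. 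Applying the same procedure to $\varphi$, I expect to obtain $g \in G$, a diagonal automorphism $d$, a graph automorphism $\delta$, a central automorphism $\Lambda$, and a ring endomorphism $\rho\colon R \to R$ satisfying \eqref{eq:comp}.

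The concrete execution would proceed as follows. First, using the Gauss decomposition, the locally inner hypothesis, and the normality of $E(\Phi,R)$ in $G(\Phi,R)$, I would show that $\varphi(U)$ lies inside some $G$-conjugate of $U$; after post-composition with $i_{g^{-1}}$ for an appropriate $g$, one may then assume that $\varphi$ sends $U$, $V$, and $H$ into themselves. Next, I would analyze the permutation of the set of root subgroups induced by $\varphi$: the Chevalley commutator formula forces this permutation to be realised by a Dynkin diagram automorphism $\delta$, and composing with $\delta^{-1}$ reduces to the case where $\varphi$ stabilises each root subgroup $X_\alpha$. Restricting $\varphi$ to each $X_\alpha$ then gives an additive self-map of $(R,+)$, and the Chevalley commutator formula together with the induced action on $H$ compels these maps, up to a common diagonal rescaling, to coincide with a single ring endomorphism $\rho\colon R \to R$; this rescaling yields the diagonal component $d$. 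Any residual deviation on the centre supplies the central component $\Lambda$.

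The main obstacle I foresee is the first reduction step: unlike the isomorphism setting of Theorem \ref{standard_isom}, where the target is a full Chevalley group of a known structure, here we must characterise $\varphi(U)$ intrinsically inside $G(\Phi,R)$. The locally inner hypothesis gives us a strong handhold---every element of $\varphi(U)$ is conjugate in $G$ to a genuine root unipotent of $G$---and combining this with the uniqueness of the height-ordered product expression for elements of $U$ should allow us to pin down $\varphi(U)$ as a $G$-conjugate of $U$. Once this step succeeds, the remaining reductions run closely in parallel with the proof of Theorem \ref{standard_isom}, the only novelty being that the map $\rho$ obtained at the end need only be an endomorphism, not an automorphism, of $R$.
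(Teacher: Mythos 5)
Your route is genuinely different from the paper's, and unfortunately it stalls exactly where you predict it will. The paper does not attempt to re-run the extraction procedure behind Theorem~\ref{standard_isom} on the endomorphism $\varphi$. Instead it observes that $\varphi$, being injective, maps $G$ isomorphically onto a subgroup $G_0\leqslant G(\Phi,R)$, and then invokes the bi-interpretability rigidity result \cite[Corollary~5]{Bu-bi}: any abstract group isomorphic (indeed, merely elementarily equivalent) to $G(\Phi,R)$ is itself a Chevalley group $G(\Phi,S)$. This lets one factor $\varphi$ as an isomorphism $G(\Phi,R)\to G(\Phi,S)$ followed by a ring homomorphism $S\to R$ of Chevalley groups, and Theorem~\ref{standard_isom} then delivers the decomposition \eqref{eq:comp} in one stroke. (The case of $E$ is handled by noting $E$ is characteristic and the rigidity result applies to it too.) Notably, the locally inner hypothesis is used in the paper's proof of this lemma \emph{only} to get injectivity; all the conjugacy information is saved for the later lemmas.

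The gap in your plan is the first reduction: showing that $\varphi(U)$ lies in a single $G$-conjugate of $U$. Local innerness gives you, for each $u\in U$, \emph{some} $x_u\in G$ with $\varphi(u)=x_uux_u^{-1}$, but the conjugator depends on $u$, and assembling these pointwise conjugacies into one global conjugator is precisely the content of \Sha-rigidity --- it is what the whole theorem is trying to prove, so it cannot be assumed at the outset. Your parenthetical claim that ``every element of $\varphi(U)$ is conjugate to a genuine root unipotent'' is also inaccurate: a general element of $U$ is a product of root unipotents, and its $\varphi$-image is conjugate to that product, not to a root element. Finally, the assertion that the proof of Theorem~\ref{standard_isom} ``never really uses surjectivity'' is an unverified claim about an argument spread over several papers; that proof exploits the known position of $U$, $H$, $N$ inside the \emph{target} Chevalley group, which is unavailable when the target is an unidentified subgroup $\varphi(G)\leqslant G$. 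The authors themselves remark (Remark~\ref{rem:coHopf}) that their argument is ``a sort of overkill'' and that a more direct reasoning of the kind you are sketching would be interesting --- but as it stands your proposal identifies the obstacle without overcoming it, whereas the paper sidesteps it entirely via \cite{Bu-bi}.
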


\begin{proof}
Since $\varphi$ is injective, its image $G_0:=\varphi(G)$ is a subgroup of $G$ which is isomorphic to~$G$.
By a recent rigidity result of the first author \cite[Corollary~5]{Bu-bi}, any abstract group elementarily equivalent
to a Chevalley group $G(\Phi,R)$ is isomorphic to a Chevalley group $G(\Phi,S)$. {\it{A fortiori}}, this is true
for the group~$G_0$, which is isomorphic to~$G$, so that $G_0\cong G(\Phi,S)$ for some ring~$S$. We thus have
a decomposition of $\varphi$ into a composition of a monomorphism $\varphi_0$ and an isomorphism~$\varphi_1$:
$$
\varphi\colon G(\Phi,R) \overset{\varphi_1}{\longrightarrow} G(\Phi,S) \overset{\varphi_0}{\longrightarrow} G(\Phi,R).
$$
By Theorem~\ref{standard_isom} applied to $\varphi_0$ and~$\varphi_1$, we conclude that
$\varphi$ is a composition of inner, diagonal, graph and central automorphisms, and a ring endomorphism of $G(\Phi,R)$.

The same arguments apply to $E$ because in our set-up $E$ is a characteristic subgroup of $G$~\cite{Vas},
and the assertion of \cite[Corollary~5]{Bu-bi} is valid for~$E$.

The lemma is proven.  \end{proof}



\begin{remark} \label{rem:psi}
Note that since $\varphi$ is locally inner, so is $\psi:=\varphi \circ i_{g^{-1}}= \rho \circ \Lambda \circ \delta \circ d$.
\end{remark}

\begin{remark} \label{rem:coHopf}
The argument in the proof of Lemma \ref{lem:comp} is a sort of overkill.
It would be interesting to produce a more direct reasoning.

It however can be used
for describing co-Hopfian Chevalley groups over local rings. Recall that a group $G$
(or a ring $R$) is called \emph{co-Hopfian} if any its injective endomorphism is an isomorphism.
Any field $k$ which is not co-Hopfian (say, $k=\mathbb F_q(t)$ whose Frobenius endomorphism is
injective but not surjective) gives rise to a Chevalley
group $G(k)$ which is not co-Hopfian either: indeed, any injective non-surjective endomorphism of $k$
induces an injective non-surjective endomorphism of $G(k)$. The following statement can be proven in the
same way as Lemma \ref{lem:comp}.
\end{remark}

\begin{proposition} \label{prop:co-Hopf}
Let $R$ and $\Phi$ be as in Theorem A. Then $G(\Phi,R)$ is co-Hopfiam if and only if
so is $R$.
\end{proposition}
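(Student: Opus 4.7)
\textbf{Proof proposal for Proposition \ref{prop:co-Hopf}.}

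The plan is to run both implications through the standard-form decomposition underlying Lemma~\ref{lem:comp}, showing that the only piece that can fail to be surjective is the ring-theoretic one. Thus co-Hopficity of $G(\Phi,R)$ will be controlled exactly by co-Hopficity of $R$.

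For the implication ``$R$ not co-Hopfian $\Rightarrow$ $G(\Phi,R)$ not co-Hopfian'', take any injective non-surjective ring endomorphism $\rho\colon R\to R$ and let $\tilde\rho$ denote the ring endomorphism of $G(\Phi,R)$ obtained from $\rho$ as in item~4 of Section~\ref{sec:Chev}. Injectivity of $\tilde\rho$ follows because $\rho$ is already injective on each root subgroup $x_\alpha(R)\cong R$, and the Gauss decomposition together with independence of the root coordinates lets one recover any $g$ from its image under a faithful linear representation whose matrix entries transform under~$\rho$. For non-surjectivity, pick $t_0\in R\setminus\rho(R)$: in a fixed faithful representation $\pi$, the element $\pi(x_\alpha(t_0))$ has a matrix coordinate equal to $t_0$, whereas every element of $\pi(\tilde\rho(G(\Phi,R)))$ has entries inside $\rho(R)$, so $x_\alpha(t_0)\notin\tilde\rho(G(\Phi,R))$.

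For the converse, let $\varphi$ be an injective endomorphism of $G(\Phi,R)$. I would first observe that the argument of Lemma~\ref{lem:comp} never uses the locally-inner hypothesis: injectivity of $\varphi$ alone makes $G_0=\varphi(G(\Phi,R))$ isomorphic to $G(\Phi,R)$, the bi-interpretability result of \cite{Bu-bi} identifies $G_0$ with a Chevalley group $G(\Phi,S)$ for some ring~$S$, and Theorem~\ref{standard_isom} then applies to the resulting factorization $G(\Phi,R)\xrightarrow{\varphi_1}G(\Phi,S)\xrightarrow{\varphi_0}G(\Phi,R)$ to yield
\[
\varphi=\rho\circ\Lambda\circ\delta\circ d\circ i_g,
\]
where $\rho$ is induced by a ring homomorphism $R\to R$ (also denoted $\rho$). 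Since the factors $i_g,\,d,\,\delta,\,\Lambda$ are automorphisms of $G(\Phi,R)$, injectivity (resp.\ surjectivity) of $\varphi$ is equivalent to injectivity (resp.\ surjectivity) of the group endomorphism~$\rho$, which in turn, by the same root-subgroup and matrix-entry arguments as in the first direction, is equivalent to injectivity (resp.\ surjectivity) of the underlying ring map on~$R$. If $R$ is co-Hopfian, $\rho$ must be surjective, hence so is $\varphi$, and $G(\Phi,R)$ is co-Hopfian.

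The main obstacle is to justify that the proof of Lemma~\ref{lem:comp} genuinely extends to arbitrary injective endomorphisms, i.e.\ that Theorem~\ref{standard_isom} can be brought to bear on the monomorphism $\varphi_0$, which is only an isomorphism onto its image inside $G(\Phi,R)$ rather than between two Chevalley groups on the nose. This step leans entirely on the bi-interpretability result of \cite{Bu-bi}: it reidentifies $G_0$ as a bona fide Chevalley group of the same type, after which $\varphi_0$ becomes an honest standard isomorphism onto $G(\Phi,S)$ composed with the inclusion induced by a ring homomorphism $S\to R$. The remaining verifications, that injectivity and surjectivity at the group level transfer cleanly to the ring level, are routine given the parameterization of root subgroups by $R$ and the functoriality of the Chevalley construction.
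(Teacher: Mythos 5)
Your proposal is correct and follows exactly the route the paper intends: the authors give no separate proof of Proposition~\ref{prop:co-Hopf}, stating only that it ``can be proven in the same way as Lemma~\ref{lem:comp}'', with the easy direction (a non-co-Hopfian ring yields a non-co-Hopfian group) already sketched in Remark~\ref{rem:coHopf}. You reproduce both halves — the induced ring endomorphism for one implication, and the decomposition via \cite{Bu-bi} and Theorem~\ref{standard_isom} (which, as you rightly note, needs only injectivity rather than the locally-inner hypothesis) for the other — and the one delicate point you flag is the same one the paper itself acknowledges as ``a sort of overkill'' in Remark~\ref{rem:coHopf}.
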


It is worth noting that the co-Hopfian property is often related to other rigidity properties, see, e.g.,
\cite{Be},~\cite{ALM}.

\subsection{Ring endomorphism is an automorphism}\leavevmode
We continue the proof of Theorem A.

\begin{lemma} \label{lem:ring}
Let $\varphi$ be a locally inner endomorphism as in Lemma \ref{lem:comp}.
Then in decomposition~\eqref{eq:comp} the ring endomorphism $\rho$ is an
isomorphism.
\end{lemma}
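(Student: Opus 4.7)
The plan is to exploit the locally inner property of $\psi := \varphi \circ i_{g^{-1}} = \rho \circ \Lambda \circ \delta \circ d$ from Remark~\ref{rem:psi} by evaluating it on carefully chosen semisimple torus elements $h_\alpha(t)$ with $\alpha\in\Phi$ and $t\in R^*$, and then extracting ring-theoretic constraints that force $\rho\colon R\to R$ to be surjective. First I will compute $\psi(h_\alpha(t))$: since the diagonal automorphism $d$ is conjugation by a torus element of $T(\Phi,S)$ for some ring $S\supseteq R$, it fixes every element of the abelian torus~$T$; the central automorphism $\Lambda$ is trivial on the elementary subgroup $E$ and hence on $h_\alpha(t)\in E$; the graph automorphism sends $h_\alpha(t)$ to $h_{\delta(\alpha)}(t)$; and the ring endomorphism satisfies $\rho(h_\beta(s))=h_\beta(\rho(s))$. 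Hence $\psi(h_\alpha(t))=h_{\delta(\alpha)}(\rho(t))$, and the local inner hypothesis forces this element to be conjugate to $h_\alpha(t)$ in~$G$.

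Next I turn this conjugacy into a polynomial identity over $R$ by comparing characteristic polynomials in a faithful representation. For $\Phi=\mathbf{A}_{n-1}$ and $\alpha$ a simple root the natural $\SL_n$-representation yields $(\rho(t)-t)(\rho(t)-t^{-1})=0$ in~$R$; for other types the analogous computation in the adjoint (or a suitable minuscule) representation produces $\rho(t)=g(t)$ for $g$ in a finite collection of rational functions with integer coefficients (the images of $t$ under the Weyl action on cocharacters). In every case $t$ itself lies in $\rho(R)$: since $\rho$ commutes with $\mathbb Z$-rational operations on $R^*$, the rational inverse $g^{-1}$ gives $t=g^{-1}(\rho(t))=\rho(g^{-1}(t))\in\rho(R)$. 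Hence $R^*\subseteq\rho(R)$. For $t\in\mathfrak m$ we have $1+t\in R^*$, so $t=(1+t)-1\in\rho(R)$, yielding $\rho(R)=R$. Injectivity of $\rho$ is immediate from that of $\varphi$, so $\rho$ is a ring automorphism of~$R$.

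The main technical obstacle will be passing from the group-theoretic conjugacy $h_{\delta(\alpha)}(\rho(t))\sim h_\alpha(t)$ in $G(\Phi,R)$ to the clean pointwise conclusion $\rho(t)\in\{t,t^{-1}\}$ (or its analogue) over the local ring~$R$, since naive cancellation of the factored polynomial equation can be obstructed by zero divisors. I plan to handle this by reducing modulo the maximal ideal $\mathfrak m$, where the residue-field analogue of the equation is immediate, and then lifting using that $t-t^{-1}$ is a unit whenever $\bar t\neq\pm1$ in $k=R/\mathfrak m$. The remaining elements with $\bar t=\pm1$ can be covered by applying the same analysis to products $h_\alpha(t)h_\beta(s)$ with $\bar s$ chosen generically; the characteristic restrictions imposed in Theorem~A guarantee enough freedom for such choices.
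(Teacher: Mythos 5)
Your opening moves coincide with the paper's: both proofs evaluate $\psi=\rho\circ\Lambda\circ\delta\circ d$ on the semisimple elements $h_\alpha(t)$, use the invariance of these elements under $d$ and $\Lambda$ together with the conjugacy of $h_{\delta(\alpha)}(s)$ with $h_\alpha(s)$ to reduce everything to the conjugacy of $h_\alpha(t)$ with $h_\alpha(\rho(t))$, and then aim to show $t\in\rho(R)$ for every unit $t$, after which surjectivity follows because $R$ is additively generated by its units. The paper extracts $t\in\rho(R)$ by asserting that conjugate diagonal matrices have the same entries up to reordering, so that the sets $S=\{1,t^{\pm1},t^{\pm2}\}$ and $\rho(S)$ coincide and $t=\rho(t^j)$ for some $j$. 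You instead compare characteristic polynomials, which only yields the factored identity $(\rho(t)-t)(\rho(t)-t^{-1})=0$ and its analogues, and you must then fight zero divisors.

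That is where the genuine gap sits. Your plan is to reduce mod $\mathfrak m$ and lift when $t-t^{-1}$ is a unit, i.e.\ when $\bar t\neq\pm1$ in $k=R/\mathfrak m$, and to dispose of the remaining units by a ``generic'' auxiliary choice. But the units with $\bar t=\pm1$ are not a removable corner case: they are exactly the ones you need, since surjectivity onto $\mathfrak m$ is obtained from the units $1+m$, $m\in\mathfrak m$, all of which reduce to $1$. Moreover, the proposed fix has no room to operate when the residue field is $\mathbb F_2$ or $\mathbb F_3$, where \emph{every} unit reduces to $\pm1$; your appeal to ``the characteristic restrictions imposed in Theorem~A'' is unfounded, since Theorem~A imposes no invertibility hypotheses at all for the simply-laced systems other than $\mathbf A_2$ (so $k=\mathbb F_2$ is allowed for, say, $\mathbf A_3$ or $\mathbf D_\ell$), and even $1/2\in R$ still permits $k=\mathbb F_3$. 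A secondary soft spot: for weights with $\langle\lambda,\alpha^\vee\rangle=\pm2$ (adjoint entries $t^{\pm2}$, or $t^{\pm3}$ for short roots in $\mathbf G_2$) the map $g(t)=t^2$ has no rational inverse, so the step $t=g^{-1}(\rho(t))=\rho(g^{-1}(t))$ does not parse; the correct conclusion from $t=\rho(t)^{j}$ is $t=\rho(t^{j})\in\rho(R)$, which is what the paper's set equality $S=\rho(S)$ delivers directly. To complete your argument you would need either to prove the multiset statement the paper relies on, or to supply a separate treatment of the units congruent to $\pm1$ that works over arbitrary (in particular, very small) residue fields.
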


\begin{proof}
We use the decomposition of $\psi$ from Remark \ref{rem:psi} in
place of \eqref{eq:comp}.

Any $h_\alpha(t)$, $\alpha\in \Phi$, $t\in R^*$, is invariant under diagonal and central automorphisms, hence we have
\begin{equation} \label{eq:ringaut}
\psi(h_\alpha(t))=\rho \circ \Lambda \circ \delta \circ d (h_\alpha(t))=\rho \circ \Lambda \circ \delta (h_\alpha(t))=\rho\circ \Lambda (h_{\delta(\alpha)}(t))=h_{\delta(\alpha)}(\rho(t)).
\end{equation}
If $\alpha$ and $\beta$ are of the same length, $h_{\alpha}(s)$ and $h_\beta(s)$ are conjugate for any $s$, therefore
the right-hand side of \eqref{eq:ringaut} is conjugate to $h_{\alpha}(\rho(t))$, As $\psi$ is locally inner, the left-hand side of
\eqref{eq:ringaut} is conjugate to $h_{\alpha}(t)$, so that $h_\alpha(t)$ and $h_\alpha(\rho(t))$ are conjugate. In the matrix form
(with respect to any representation of $G$) these two elements are  diagonal matrices with entries taken from the sets
$$
S:=\{ 1, t, t^{-1}, t^2,t^{-2}\}\text{ and } \rho(S)=\{ 1,\rho(t), \rho(t)^{-1}, \rho(t)^2, \rho (t)^{-2}\};
$$
for the root system $\mathbf G_2$ and a short root, one has to adjoin $\{ t^3, t^{-3}\}$ and $\{ \rho(t)^3, \rho(t)^{-3}\}$ to these sets (see~\cite{Steinberg}).


Since these diagonal matrices are conjugate, they have the same entries up to reordering. Therefore,
the sets $S$ and $\rho (S)$ coincide. In other words, $\rho$ is a permutation of $S$, so that $t$ lies in the image of $\rho$.
Since $t$ is an arbitrary invertible element and $R$ is generated by its invertible elements, $\rho$ is a surjective automorphism of~$G$, which proves the lemma.  \end{proof}









\subsection{Graph automorphism is identical}\leavevmode

\begin{lemma} \label{lem:graph}
Let $\varphi$ be a locally inner endomorphism as in Lemma \ref{lem:comp}.
Then in decomposition~\eqref{eq:comp} the graph automorphism $\delta$ is
identical.
\end{lemma}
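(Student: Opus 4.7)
The plan is to assume $\delta\neq 1$ and derive a contradiction by testing $\psi$ on a torus element. First I restrict $\psi$ to $H\subset E$. Since the diagonal automorphism $d$ fixes every $h_\beta(t)$ (it arises from conjugation by an element of an extended torus, which commutes with $H$), and since the central automorphism $\Lambda$ acts as the identity on $E\supset H$, I obtain, for all $\beta\in\Phi$ and $t\in R^*$,
$$
\psi(h_\beta(t))=h_{\delta(\beta)}(\rho(t)).
$$

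Next, apply this to a generic element $g=h_{\alpha_1}(t_1)\cdots h_{\alpha_\ell}(t_\ell)\in H$, so that $\psi(g)=\prod_i h_{\delta(\alpha_i)}(\rho(t_i))$ is again in $H$. Since $\psi$ is locally inner, there exists $n\in G$ with $\psi(g)=ngn^{-1}$, and as both elements lie in the maximal torus, this conjugation is realised by some $w\in W=N/H$. Reducing modulo the maximal ideal $\mathfrak m$ yields a locally inner endomorphism $\bar\psi$ of $G(\Phi,k)$ (where $k=R/\mathfrak m$) with the same standard decomposition. On the open dense locus of regular torus elements the Weyl element $w$ is uniquely determined, and a rigidity argument then produces a single $w_0\in W$ with $\bar\psi|_{H(\Phi,k)}=\mathrm{Ad}(w_0)$. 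Hence
$$
h_{w_0(\alpha_i)}(\bar t)=h_{\delta(\alpha_i)}(\bar\rho(\bar t))
$$
in $H(\Phi,k)$ for every simple root $\alpha_i$ and every $\bar t\in k^*$.

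Matching the two sides in the parametrization of $H$ by simple coroots forces $w_0(\alpha_i)=\delta(\alpha_i)$ for every $i$ (together with $\bar\rho=\mathrm{id}$); the \emph{a priori} alternative $w_0(\alpha_i)=-\delta(\alpha_i)$ with $\bar\rho(\bar t)=\bar t^{-1}$ is ruled out because $\bar t\mapsto \bar t^{-1}$ fails to be a ring automorphism whenever $|k|>3$. Since the Weyl group acts simply transitively on the bases of $\Phi$ and $w_0$ then preserves the fundamental base, one has $w_0=1$, hence $\delta=1$, contradicting the assumption. The main obstacle I anticipate is the handling of residue fields with $|k|\leq 3$, where the multiplicative argument degenerates; in such cases the cleanest route is to invoke the Feit--Seitz theorem on the A-rigidity of finite simple Chevalley groups together with Steinberg's description of $\mathrm{Out}$ as a direct product of graph, field, and diagonal factors, forcing the graph component of the outer class of $\bar\psi$ to vanish.
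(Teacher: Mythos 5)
Your route is genuinely different from the paper's: the paper tests $\psi$ on a product of root unipotents $x_{\alpha_1}(1)\cdots$ taken over a set of simple roots that is not $\delta$-stable and uses the Bruhat decomposition over the residue field to show that the image cannot be conjugate to the argument, whereas you test $\psi$ on the torus. Unfortunately the torus carries too little information in cases the theorem must cover, and this is a genuine gap, not a boundary nuisance. Theorem A imposes no invertibility condition for $\mathbf A_\ell$ ($\ell\geqslant 3$), $\mathbf D_\ell$ and $\mathbf E_6$ --- precisely the types with nontrivial graph automorphisms --- so the residue field $k$ may be $\mathbb F_2$, where $H(\Phi,\mathbb F_2)$ is generated by $h_\alpha(t)$ with $t\in\{1\}$ and is trivial, or $\mathbb F_3$, where $H(\Phi,k)$ is an elementary abelian $2$-group on which $\delta$ and $W$ need not act faithfully: for $\mathbf A_\ell$ one has $\delta=-w_0$ on the (co)root lattice, and $-1$ acts trivially modulo $2$, so $\delta$ and the longest element $w_0$ agree on $H(\mathbf A_\ell,\mathbb F_3)$ and no contradiction can be extracted from torus elements. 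The proposed fallback via Feit--Seitz does not close this cleanly: that theorem concerns class-preserving automorphisms of finite \emph{simple} groups, while here the conjugators are only known to lie in $G(\Phi,k)$ (not in $E$ or $E/Z$), so the induced automorphism of the simple quotient is a priori only ``Inndiag-locally-inner''; and it imports the classification of finite simple groups into a statement the paper settles by an elementary Bruhat computation valid over every field including $\mathbb F_2$.

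There are two further soft spots even for large $k$. First, the existence of a \emph{single} $w_0$ with $\bar\psi|_{H(\Phi,k)}=\mathrm{Ad}(w_0)$ is asserted, not proved: $\bar\psi$ is an abstract automorphism, so ``open dense locus'' and continuity are unavailable; what you actually get is that $H(k)$ is covered by the finitely many subgroups $S_w=\{h:\bar\psi(h)=w(h)\}$ (these are subgroups because $T$ is abelian), and a group can be covered by finitely many \emph{proper} subgroups, so some additional argument (elements of large order, or a Neumann-type index count) is needed. One should also note that two conjugate elements of $T(k)$ are conjugate by an element of $N(T)$ only after passing to $\bar k$. Second, the matching step overreaches: $w_0(\alpha_i)$ is in general not a simple root, the parametrization of $H$ by coroots is not faithful for non-simply-connected groups, and the conclusion $\bar\rho=\mathrm{id}$ is neither forced at this stage (an arbitrary field automorphism $\rho$, e.g.\ Frobenius or complex conjugation, must still be entertained here --- its triviality is the content of the later Lemma \ref{lem:ring-id}) nor needed. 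The underlying idea --- that $\delta$ does not lie in $W$ inside $\mathrm{Aut}(\Phi)$, hence cannot act on a faithful torus like a Weyl element --- is sound, but making it into a proof requires exactly the largeness of $k^*$ that the hypotheses do not guarantee; the paper's unipotent computation is the way around this.
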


\begin{proof} As in the previous section, we replace decomposition \eqref{eq:comp} with the
decomposition of a locally inner automorphism $\psi$ of the Chevalley group $G(\Phi,R)$
into the composition $\rho\circ \Lambda \circ \delta \circ d$ of ring, central, graph and
diagonal automorphisms (recall that $\rho$ is an isomorphism by Lemma~\ref{lem:ring}).
Note that $\psi$ induces an automorphism $\overline \psi$ of the Chevalley group $G(\Phi,R/\Rad R)=G(\Phi,k)$
over the residue field $k$ of~$R$ since its components $\rho, \Lambda, \delta$ and~$d$ induce, respectively

(1) the ring (field) automorphism $\overline \rho$ of $k$ and $G(\Phi,k)$ since any automorphism of~$R$ maps the radical $\Rad R$ into itself;

(2) the central automorphism $\overline \Lambda$ of $G(\Phi,k)$;

(3) the graph automorphism $\overline \delta$ of $G(\Phi,k)$ induced by the same $\delta\in \Aut(\Phi)$;

(4) the diagonal automorphism $\overline d$ of $G(\Phi,R)$, which maps every $x_{\alpha_i}(1)$, $i=1,\dots, \ell$, to $x_{\alpha_i}(\overline d_i)$.

Since $\psi$ is locally inner, for any $g\in G(\Phi,R)$ we have $\psi(g)=xgx^{-1}$ for some $x\in G(\Phi,R)$.
Therefore for any $\overline g\in G(\Phi,k)$ we have
$$
\psi(\overline g)=\overline \psi(g)=\overline{xgx^{-1}}= \overline x \overline g \overline x^{-1},
$$
so that $\overline \psi$ is also a locally inner automorphism,

Consequently, if we prove that for any locally inner automorphism of a Chevalley group over an {\it arbitrary field} a graph automorphism
in the decomposition is identical, then we prove the same fact for a Chevalley groups over any local ring.
Thus in the sequel we assume that the base ring is a field. This allows us to use the Bruhat decomposition of Chevalley groups.

We consider several separate cases.

{\bf Case $\mathbf A_\ell$.}
The only non-trivial graph automorphism~$\delta$ of $\mathbf A_\ell$ maps each $x_{\alpha_i}(t)$, $1\leqslant i\leqslant \ell$, to $x_{\alpha_{\ell-i}}(t)$.
For
$$
g=x_{\alpha_1}(1) x_{\alpha_2}(1)\dots x_{\alpha_{\ell-1}}(1)\in G(\mathbf A_\ell, k),
$$
we have
$$
\psi(g)=x_{\alpha_\ell}(a_\ell)x_{\alpha_{\ell-1}}(a_{\ell-1})\dots x_{\alpha_2}(a_2)=x_{\alpha_2}(a_2)\dots x_{\alpha_{\ell-1}}(a_{\ell-1})x_{\alpha_\ell}(a_\ell) x_{\alpha_{\ell+1}}(a_{\ell+1})\dots x_{\alpha_m}(a_m),
$$
where $a_2,\dots, a_{\ell}\in k^*$, $a_{\ell+1},\dots,a_m\in k$.

Suppose that $g$ and $\psi(g)$ are conjugated by some $x\in G(\Phi,k)$ with the Bruhat decomposition
$$
x=t u_1 \mathbf w u_2,\quad t\in T(\mathbf A_\ell, k),\ u_1,u_2 \in U(G),\ \mathbf w\in W(G).
$$
This means that
$$
\mathbf w(x_{\alpha_1}(1) x_{\alpha_2}(1)\dots x_{\alpha_{\ell-1}}(1))^{u_2}\mathbf w^{-1}=(x_{\alpha_2}(a_2)\dots x_{\alpha_{\ell-1}}(a_{\ell-1})x_{\alpha_\ell}(a_\ell) x_{\alpha_{\ell+1}}(a_{\ell+1})\dots x_{\alpha_m}(a_m))^{u_1^{-1}t^{-1}},
$$
or, equivalently, 
$$
(x_{\alpha_1}(1)\dots x_{\alpha_{\ell-1}}(1)x_{\alpha_{\ell+1}}(b_{\ell+1})\dots x_{\alpha_m}(b_m))^{\mathbf w}=x_{\alpha_2}(c_2)\dots x_{\alpha_\ell}(c_\ell)x_{\alpha_{\ell+1}}(c_{\ell+1})\dots x_{\alpha_m}(c_m),
$$
where $c_2,\dots, c_\ell\in k^*$.

Since the right-hand side belongs to $U$, so does the left-hand side, therefore
$$
\mathbf w (\alpha_1),\dots , \mathbf w(\alpha_{\ell-1})\in \Phi^+.
$$
Since under the action of~$\mathbf w\in W$ only one simple root $\alpha_\ell$ can be mapped to a negative root,
either $\mathbf w=e$, or $\mathbf w=w_{\alpha_\ell}(1)$. It is clear that $\mathbf w=e$ is impossible since the left-hand and right-hand sides do not coincide.
If $\mathbf w=w_{\alpha_\ell}(1)$, then
$$
x_{\alpha_1}(1)\dots x_{\alpha_{\ell-1}}(1)x_{\alpha_{\ell+1}}(b_{\ell+1})\dots x_{\alpha_m}(b_m)=(x_{\alpha_2}(c_2)\dots x_{\alpha_\ell}(c_\ell)x_{\alpha_{\ell+1}}(c_{\ell+1})\dots x_{\alpha_m}(c_m))^{w_{\alpha_\ell}(1)},
$$
and the left-hand side of this equality belongs to $U$ while the right-hand side does not.

Therefore $g$ and $\psi(g)$ are not conjugate in $G(\mathbf A_\ell,k)$, contradiction.

\bigskip

{\bf Case  $\mathbf D_\ell$, $\ell\geqslant 5$.}
We apply the same idea as in the previous case. The only non-trivial graph automorphism $\delta$ swaps the last two simple roots,
so we have $\delta(x_{\alpha_{\ell-1}}(t))=x_{\alpha_\ell}(t)$, $\delta(x_{\alpha_\ell}(t))=x_{\alpha_{\ell-1}}(t)$, and for all other simple roots $\alpha_i\in \{ \alpha_1,\dots, \alpha_{\ell-2}\}$ we have $\delta(x_{\alpha_i}(t))=x_{\alpha_i}(t)$.

Take $g=x_{\alpha_1}(1)\dots x_{\alpha_{\ell-2}}(1)x_{\alpha_{\ell-1}}(1)$, then $\psi(g)=x_{\alpha_1}(a_1)\dots x_{\alpha_{\ell-2}}(a_{\ell-2})x_{\alpha_{\ell}}(a_\ell)$, where $a_1,\dots, a_\ell \in k^*$.
The same arguments as in the previous case show that $\delta (g)$ is not conjugate to $g$.

\bigskip

{\bf  Case  $\mathbf D_4$.} There are five different non-trivial graph automorphisms: three of them are similar to the previous case
(each of them swaps two leaves of the tree and preserves two other vertices of the Dynkin diagram),
and also there are two graph automorphisms of order 3 acting on the leaves of the tree). Since they are inverse to each other, we will consider only one of them:
$$
\delta_0(x_{\alpha_1}(t))=x_{\alpha_3}(t),\ \delta_0(x_{\alpha_2}(t))=x_{\alpha_2}(t),\ \delta_0(x_{\alpha_3}(t))=x_{\alpha_4}(t),\ \delta_0(x_{\alpha_4}(t))=x_{\alpha_1}(t).
$$
Taking $g=x_{\alpha_1}(1)x_{\alpha_2}(1)x_{\alpha_3}(1)$, we obtain
$$
\psi(g)=x_{\alpha_3}(1)x_{\alpha_2}(1)x_{\alpha_4}(1)=x_{\alpha_2}(a_2)x_{\alpha_3}(a_3)x_{\alpha_4}(a_4)x_{\alpha_5}(a_5)\dots x_{\alpha_m}(a_m),\quad a_2,a_3,a_4\in k^*,
$$
which is not conjugate to $g$, exactly as in the previous cases.


\bigskip

{\bf Case $\mathbf E_6$.} This case is treated similarly to $\mathbf A_\ell$ and $\mathbf D_\ell$. We have the simple roots $\alpha_1,\dots, \alpha_6$, where
$$
\delta(\alpha_1)=\alpha_6, \ \delta(\alpha_2)=\alpha_5,\ \delta(\alpha_3)=\alpha_3,\ \delta(\alpha_4)=\alpha_4,\ \delta(\alpha_5)=\alpha_2\text{ and }\delta (\alpha_6)=\alpha_1.
$$
So for $g=x_{\alpha_1}(1)\dots x_{\alpha_5}(1)$ we have
$$
\psi(g)=x_{\alpha_6}(1)\dots x_{\alpha_2}(1)=x_{\alpha_2}(a_2)\dots x_{\alpha_6}(a_6)x_{\alpha_7}(a_7)\dots x_{\alpha_m}(a_m),\quad a_2,\dots, a_6\in k^*,
$$
which is not conjugate to $g$ by the same arguments as in the case $\mathbf A_\ell$. \end{proof}




\subsection{Diagonal automorphism is inner}
By Lemmas \ref{lem:ring} and \ref{lem:graph}, we may and will assume that
our locally inner automorphism $\psi$ is the composition $\rho\circ \Lambda \circ d$ of ring, central and diagonal automorphisms.

\begin{lemma} \label{lem:diag}
The automorphism $d$ in the above decomposition is inner.
\end{lemma}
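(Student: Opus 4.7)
The plan is to produce an element $h \in T(\Phi, R) \subseteq G(\Phi,R)$ whose conjugation action coincides with $d$. Since $d$ is uniquely determined by the tuple $(d_1,\ldots,d_\ell) \in (R^*)^\ell$ via $d(x_{\alpha_i}(t))=x_{\alpha_i}(d_i t)$, the task reduces to finding $h$ whose induced root characters satisfy $\chi_{\alpha_i}(h)=d_i$ for each simple root $\alpha_i$. The main tool is the locally inner assumption applied to well-chosen test elements, together with a reduction to the residue field in the spirit of Lemma \ref{lem:graph}.

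First, observe that since $\Lambda$ is central and hence trivial on the commutator subgroup $E$, one has $\psi(x_\alpha(t))=x_\alpha(\rho(\mu_\alpha)\rho(t))$ for every root $\alpha$, where $\mu_\alpha=\prod d_i^{c_i}$ for $\alpha=\sum c_i\alpha_i$. Local innerness of $\psi$ then forces $x_\alpha(\rho(\mu_\alpha)\rho(t))$ to be conjugate to $x_\alpha(t)$, and similarly for products of root unipotents.

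Next, reduce modulo $\Rad R$ as in Lemma \ref{lem:graph}: $\psi$ induces a locally inner automorphism $\bar\psi$ of the Chevalley group $G(\Phi,k)$ over the residue field, where the Bruhat decomposition becomes available. Apply the local innerness condition to the test element $g_0=x_{\alpha_1}(1)\cdots x_{\alpha_\ell}(1)$; writing the conjugator as $tu_1\mathbf w u_2$ and comparing it with the Borel element $\bar\psi(g_0)=x_{\alpha_1}(\overline{\rho(d_1)})\cdots x_{\alpha_\ell}(\overline{\rho(d_\ell)})$, the same positivity argument used in the graph case forces $\mathbf w=1$, and the unipotent factors can be absorbed to show that the remaining toral factor realises the characters $\alpha_i\mapsto\overline{\rho(d_i)}$. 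This produces an element $\bar h\in T(\Phi,k)$ that implements $\bar d$ by conjugation.

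Finally, lift $\bar h$ back to $R$. Since each $d_i$ is a unit in the local ring $R$ and the required character $\alpha_i\mapsto d_i$ extends from $Q$ to $\mathcal P$ (as guaranteed by the constraints obtained in the previous step applied simultaneously to all simple roots), one can write $h$ explicitly as a product of $h_{\alpha_j}(\cdot)$'s in $T(\Phi,R)$; verification that $hx_{\alpha_i}(t)h^{-1}=x_{\alpha_i}(d_i t)$ is then direct. I expect the main obstacle to be the middle step — separating the genuine toral contribution from the Weyl and unipotent parts of an a priori arbitrary conjugator — together with the subtlety of ensuring that the character we produce actually belongs to the lattice $\mathcal P$ defining the Chevalley group, rather than merely to its adjoint quotient. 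The flexibility afforded by the locally inner condition on many test elements, combined with the unit hypothesis on the $d_i$ and the locality of $R$, should close this gap.
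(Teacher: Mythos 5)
Your first half follows the paper's route exactly: the same test element $g_0=x_{\alpha_1}(1)\cdots x_{\alpha_\ell}(1)$, reduction to the residue field $k$, the Bruhat decomposition of the conjugator, and the positivity argument forcing $\mathbf w=e$. The gap is in your final step. Working over $k$ you only obtain an element $\overline h\in T(\Phi,k)$ realising the \emph{reductions} $\overline{\rho(d_i)}$; this pins the conjugator down only modulo $\Rad R$, and the passage back to $R$ that you describe is precisely the non-trivial content of the lemma, not a formality. A diagonal automorphism whose reduction mod $\Rad R$ is inner need not itself be inner: innerness amounts to extending the character $\alpha_i\mapsto d_i$ from $Q(\Phi)$ to $\mathcal P$ with values in $R^*$, and solvability of this extension problem over $k^*$ does not imply solvability over $R^*$ (already for $\SL_4(\mathbb Z/4)$, conjugation by $\diag(3,1,1,1)$ reduces to the identity over $\mathbb F_2$ yet is not inner, since $3$ is not a fourth power in $(\mathbb Z/4)^*$). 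Your parenthetical claim that the extension to $\mathcal P$ is ``guaranteed by the constraints obtained in the previous step'' is exactly the point that needs proof, and the residue-field constraints are too weak to supply it.

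The paper closes this gap differently: the residue-field computation is used only to constrain the \emph{shape} of the actual conjugator $x\in G(R)$ supplied by local innerness over $R$ — namely, in the Gauss decomposition $x=tuv$ the factor $v$ must lie in $V(R)\cap E(R,\Rad R)$. The decisive computation is then carried out in $G(R)$ itself: a commutator calculation shows $v=e$ (otherwise the conjugate acquires a nontrivial $TV$-component), then $u$ is seen to commute with $g_0$, and comparing the remaining toral conjugation with $x_{\alpha_1}(d_1')\cdots x_{\alpha_\ell}(d_\ell')$ identifies $t\in T(R)$ with the element realising the exact values $d_i'$, whence $d=i_{\rho^{-1}(t)}$. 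To repair your argument you would need to replace the lifting of $\overline h$ by this return to $R$: extract the conjugator's toral part inside $G(R)$ rather than reconstructing it abstractly from character data over $k$.
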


\begin{proof}
Take
$
g=x_{\alpha_1}(1)\dots x_{\alpha_\ell}(1)\in E(R),
$
then, since $\Lambda$ is identical on $E(R)$, we have
$$
\psi(g)=\rho (x_{\alpha_1}(d_1)\dots x_{\alpha_\ell}(d_\ell))=x_{\alpha_1}(\rho(d_1))\dots x_{\alpha_\ell}(\rho(d_\ell))=x_{\alpha_1}(d_1')\dots x_{\alpha_\ell}(d_\ell'),
$$
where $d_1',\dots, d_\ell'\in R^*$.

The elements $g$ and $\psi(g)$ are conjugated by some $x\in G(R)$.
Hence their images $\overline g$ and $\overline{\psi(g)}$ under factorization by $G(R,\Rad R)$ are conjugated in the quotient group $G(k)$, so that
$\overline g=x_{\alpha_1}(\overline 1)\dots x_{\alpha_\ell}(\overline 1)$ and $\overline{\psi(g)}=x_{\alpha_1}(\overline d_1')\dots x_{\alpha_\ell}(\overline d_l')$ are conjugated by
$$
\overline x=t u_1 \mathbf w u_2,\quad t\in T(k),\ u_1,u_2\in U(k),\ \mathbf w\in \mathbf W.
$$
We have
\begin{multline*}
t u_1 \mathbf w (x_{\alpha_1}(\overline 1)\dots x_{\alpha_\ell}(\overline 1))^{u_2} \mathbf w^{-1} u_1^{-1} t^{-1}=x_{\alpha_1}(\overline d_1')\dots x_{\alpha_\ell}(\overline d_l')\Longleftrightarrow \\
\Longleftrightarrow  (x_{\alpha_1}(\overline 1)\dots x_{\alpha_\ell}(\overline 1)\cdot x_{\alpha_{\ell+1}}(a_{\ell+1})\dots x_{\alpha_m}(a_m))^{\mathbf w}=(x_{\alpha_1}(\overline d_1')\dots x_{\alpha_\ell}(\overline d_\ell'))^{u_1^{-1}t^{-1}}\Longleftrightarrow\\
\Longleftrightarrow x_{\mathbf w(\alpha_1)}(\overline 1)\dots x_{\mathbf w(\alpha_l)}(\overline 1)\cdot x_{\mathbf w(\alpha_{l+1})}(a_{l+1})\dots x_{\mathbf w(\alpha_m)}(a_m)=\\
=x_{\alpha_1}(t_1\overline d_1')\dots x_{\alpha_\ell}(t_\ell\overline d_\ell')x_{\alpha_{\ell+1}}(b_{\ell+1})\dots x_{\alpha_m}(b_m),\text{ where }t_1,\dots, t_\ell\in k^*.
\end{multline*}
If $\mathbf w\ne e$, then there exist $j_1,\dots, j_k \in \{ 1,\dots, \ell\}$ such that $\mathbf w(\alpha_{j_1}),\dots, \mathbf w(\alpha_{j_k})\in \Phi^-$. Therefore the equality is impossible
since its right-hand side belongs to~$U$ and the left-hand side does not, so $\mathbf w=e$. Thus $\overline x=\overline t\overline u$, where $\overline t\in T(k)$ and $\overline u\in U(k)$.
This means that for the initial $x\in G(R)$, in the Gauss decomposition $x\in TUVU$ we have $x=tuv$, $t\in T(R)$, $u\in U(R)$ and $v\in V(R)\cap E(R, \Rad R)$.

Now in the initial group $G(R)$ we have
$$
( x_{\alpha_1}(1)\dots x_{\alpha_\ell}(1))^{(tuv)}= x_{\alpha_1}(d_1')\dots x_{\alpha_\ell}( d_\ell')
$$
or
$$
 x_{\alpha_1}(1)^v\dots x_{\alpha_\ell}(1)^v = x_{\alpha_1}(t_1\cdot d_1')\dots x_{\alpha_\ell)}(t_\ell\cdot  d_\ell')x_{\alpha_{\ell+1}}(b_{\ell+1})\dots x_{\alpha_m}(b_m) \in U(R),
$$
where $t_1,\dots, t_l\in R^*$ and $b_{\ell+1},\dots, b_m\in R$.

The left-hand side of this equality contains all $x_\alpha(1)$, $\alpha\in \Delta$, conjugated by
$$
v=x_{-\alpha_1}(c_1)\dots x_{-\alpha_m}(c_m),\quad c_1,\dots, c_m\in \Rad R.
$$
We know that for any $i\in \{ 1,\dots, \ell\}$ we have
$$
[x_{\alpha_i}(1),x_{-\alpha_1}(c_1)\dots x_{-\alpha_m}(c_m)]=\widetilde t x_{\alpha_i}(a)x_{-\alpha_i}(c_i')x_{-\alpha_{l+1}+\alpha_i}(c_i')\dots x_{-\alpha_m+\alpha_i}(c_m')\dots,
$$
where $-\alpha_k + \alpha_i\in \Phi^-$.

Therefore if $v\ne e$, then $x_{\alpha_1}(1)^v\dots x_{\alpha_\ell}(1)^v$ is necessarily of the form
$$
x_{\alpha_1}(a_1)\dots x_{\alpha_\ell}(a_\ell)x,\text{ where } x\in TV.
$$
Thus we have a contradiction and $v=e$. Consequently,
\begin{multline*}
 (x_{\alpha_1}(1)\dots x_{\alpha_\ell}(1))^{tu}= x_{\alpha_1}(d_1')\dots x_{\alpha_\ell}( d_\ell') \Longleftrightarrow\\
\Longleftrightarrow  (x_{\alpha_1}(1)\dots x_{\alpha_\ell}(1) x_{\alpha_{\ell+1}}(a_{\ell+1})\dots x_{\alpha_m}(a_m))^{t}=
x_{\alpha_1}(d_1')\dots x_{\alpha_\ell}( d_\ell) ,
\end{multline*}
where $a_{\ell+1},\dots, a_m\in R$.

This means that $[u,x_{\alpha_1}(1)\dots x_{\alpha_\ell}(1)]=e$ and
$$
x_{\alpha_1}(t_1)\dots x_{\alpha_\ell}(t_\ell) =
x_{\alpha_1}(d_1')\dots x_{\alpha_\ell}( d_\ell),\text{ so }t_1=d_1',\dots, t_\ell=d_\ell'.
$$
Therefore $t_1=\rho^{-1}(t)\in T(R)$ maps $x_{\alpha_1}(1),\dots, x_{\alpha_\ell}(1)$ to $x_{\alpha_1}(\rho^{-1}(d_1'))=x_{\alpha_1}(d_1),\dots, x_{\alpha_\ell}(d_\ell)$, respectively.
Thus $d=i_{t_1}$, so that $d$ is an inner automorphism.
\end{proof}





\subsection{Ring and central automorphisms are identical}
By Lemma \ref{lem:diag}, the automorphism $\rho \circ \Lambda=\psi \circ d^{-1}$ is locally inner.


\begin{lemma} \label{lem:ring-id}
The ring automorphism $\rho$ in the above decomposition is identical.
\end{lemma}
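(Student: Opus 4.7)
My plan is to use trace invariance in the adjoint representation to force $\rho=\mathrm{id}$. Since the central automorphism $\Lambda$ is trivial on the elementary subgroup $E(\Phi,R)$ (as $E=[E,E]$), the composition $\rho\circ\Lambda$ restricted to $E(R)$ coincides with $\rho$, so $\rho(g)$ is conjugate to $g$ in $G(R)$ for every $g\in E(R)$. For a fixed long root $\alpha$ and arbitrary $t\in R$, I would consider $g_t:=x_\alpha(1)\,x_{-\alpha}(t)\in E(R)$. Inside the root $SL_2$-subgroup $\langle X_\alpha, X_{-\alpha}\rangle$ a direct matrix calculation gives $\tr(g_t)=2+t$. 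Decomposing the adjoint representation of $G$ under this $SL_2$-action into the $3$-dimensional $V_2$ coming from $\{e_\alpha,h_\alpha,e_{-\alpha}\}$, a certain number $a$ of $2$-dimensional $V_1$'s (one per length-$2$ $\alpha$-string in $\Phi$), and copies of the trivial representation, one computes
\[
\tr\bigl(\mathrm{Ad}(g_t)\bigr)=(2+t)^{2}+a(2+t)+b
\]
for integer constants $a,b$ depending only on $\Phi$ and $\alpha$.

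Equating this with $\tr(\mathrm{Ad}(\rho(g_t)))$ (obtained by substituting $\rho(t)$ for $t$) and factoring yields the identity $(t-\rho(t))\bigl(4+a+t+\rho(t)\bigr)=0$ in $R$. Reducing modulo $\Rad R$, for each $\overline t\in k:=R/\Rad R$ one obtains either $\overline\rho(\overline t)=\overline t$ or $\overline\rho(\overline t)=-(4+a)-\overline t$. The second alternative cannot hold for all $\overline t$, since then $\overline\rho(1)=1$ would force $4+a=-2$, which the numerical hypotheses of Theorem~A rule out. A standard subfield-union argument, using that $\overline\rho$ is a field automorphism whose fixed set is a subfield, then forces $\overline\rho=\mathrm{id}$ on $k$. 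Lifting back to $R$: since $\overline{4+a+t+\rho(t)}=4+a+2\overline t$ is a unit outside one residue class in $k$, for such $t$ the identity gives $\rho(t)=t$; the remaining residue class is then handled by additivity, e.g.\ by writing $t=(t-1)+1$ so that $\overline{t-1}$ avoids the bad residue.

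The principal obstacle is controlling the coefficient $4+a$: the equation becomes non-informative precisely when $4+a$ vanishes in $k$. This is where the hypotheses $1/2\in R$ (for $\mathbf{A}_2,\mathbf{B}_\ell,\mathbf{C}_\ell,\mathbf{F}_4$) and $1/3\in R$ (for $\mathbf{G}_2$) become essential, since a case-by-case calculation of the length-$2$ $\alpha$-strings shows that for these systems $4+a$ is divisible only by the excluded primes; for the remaining root systems one verifies that $4+a$ is a unit in every allowed residue field. An alternative route would adapt the Gauss-decomposition technique of Lemma~\ref{lem:diag} to $x_{\alpha_1}(t)x_{\alpha_2}(1)\cdots x_{\alpha_\ell}(1)$, but the resulting system of equations coming from the Cartan matrix only pins $\rho(t)/t$ down to a subgroup of $R^{*}$ (e.g.\ cubes in type $\mathbf{A}_2$) rather than giving the sharp equality $\rho(t)=t$.
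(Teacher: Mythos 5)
Your reduction to the polynomial identity is correctly derived: for a long root $\alpha$ the adjoint module does decompose under the root $\SL_2$ into one copy of $V_2$, $a=2h^\vee-4$ copies of $V_1$ (one per length-two $\alpha$-string, and for $\alpha$ long all strings have length at most two), and trivial summands; since $\Lambda$ is trivial on $E$ and $\rho(g_t)=x_\alpha(1)x_{-\alpha}(\rho(t))$ must be conjugate to $g_t$, equality of adjoint traces indeed gives $(t-\rho(t))(4+a+t+\rho(t))=0$ in $R$. The gap is in extracting $\rho(t)=t$ from this. Observe that $4+a=2h^\vee$ is \emph{always even}, so your claim that ``for the remaining root systems one verifies that $4+a$ is a unit in every allowed residue field'' fails exactly where you need it: Theorem~A assumes $1/2\in R$ only for $\mathbf A_2,\mathbf B_\ell,\mathbf C_\ell,\mathbf F_4$, so for $\mathbf A_\ell$ $(\ell\geqslant 3)$, $\mathbf D_\ell$, $\mathbf E_6,\mathbf E_7,\mathbf E_8$ and $\mathbf G_2$ the residue field $k$ may have characteristic $2$. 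There the quantity $4+a+2\bar t$, which your lifting step needs to be ``a unit outside one residue class,'' is identically zero (it reduces to $\bar a=0$), the dichotomy on $k$ degenerates since $-(4+a)-\bar t=\bar t$, every residue class is ``bad,'' and the trick $t=(t-1)+1$ has nowhere to move. The auxiliary check via $\bar\rho(1)=1$ is likewise unjustified: $6+a=2(h^\vee+1)$ can vanish in allowed residue fields (e.g.\ $\mathbf A_3$ in residue characteristic $2$ or $5$). So the argument as written proves the lemma only for the types where $1/2\in R$ is assumed and $h^\vee\not\equiv 0$ in $k$; whether the trace identity alone suffices in the remaining cases is unclear and would require a genuinely new argument, not a routine patch.

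For comparison, the paper's proof is characteristic-free and uses no representation theory: it takes $g=h_{\alpha_1}(s)\,x_{\alpha_1}(1)\cdots x_{\alpha_\ell}(1)$ for a unit $s$ with $\rho(s)\ne s$, reuses the Gauss/Bruhat reduction from the proof of Lemma~\ref{lem:diag} to force a conjugating element $x=tu_1vu_2$ to satisfy $v=u_2=e$, and then compares the toral ($H$-) components of the two sides of the resulting identity in $TU$ to conclude $\rho(s)=s$ directly; since $R$ is local, fixing all units fixes all of $R$. If you wish to keep the trace idea, you would need either to supplement $\tr\circ\mathrm{Ad}$ with invariants that do not degenerate modulo $2$, or to restrict it to the types with $1/2\in R$ and fall back on a decomposition argument for the rest.
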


\begin{proof}
Assume the contrary. Then there exists $s\in R^*$ such that $\rho(s)=s_1\ne s$.

Take $g=h_{\alpha_1}(s)x_{\alpha_1}(1)\dots x_{\alpha_\ell}(1)$, then, taking into account that
the central automorphism acts trivially on $g$, we obtain
$$
g_1:=\rho \circ \Lambda (g)=h_{\alpha_1}(s_1)x_{\alpha_1}(1)\dots x_{\alpha_\ell}(1).
$$
If $g$ and $g_1$ are conjugated by an element $x=tu_1vu_2$, then in the same way as in the previous section we can prove that $v=u_2=e$ and $x=tu$. Then
\begin{multline*}
(h_{\alpha_1}(s)x_{\alpha_1}(1)\dots x_{\alpha_\ell}(1))^{tu}=\\
=(h_{\alpha_1}(s) h_{\alpha_1}(s)^{-1}uh_{\alpha_1}(s)u^{-1}x_{\alpha_1}(1)\dots x_{\alpha_\ell}(1)x_{\alpha_{\ell+1}}(a_{\ell+1})\dots x_{\alpha_m}(a_m))^t=\\
=h_{\alpha_1}(s) (u^{h_{\alpha_1}(s^{-1})}u^{-1}x_{\alpha_1}(1)\dots x_{\alpha_\ell}(1)x_{\alpha_{\ell+1}}(a_{\ell+1})\dots x_{\alpha_m}(a_m))^t=\\
=h_{\alpha_1}(s) u'u''x_{\alpha_1}(a_1)\dots x_{\alpha_\ell}(a_\ell)x_{\alpha_{\ell+1}}(a_{\ell+1}')\dots x_{\alpha_m}(a_m)=h_{\alpha_1}(s_1)x_{\alpha_1}(1)\dots x_{\alpha_\ell}(1).
\end{multline*}
Therefore $h_{\alpha_1}(s) u_1=h_{\alpha_1}(s_1) u_2$ and hence $s=s_1$.
\end{proof}



It remains to prove the following lemma.

\begin{lemma} \label{lem:cent}
Any locally inner central automorphism~$\Lambda$ is identical.
\end{lemma}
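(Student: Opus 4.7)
The plan is to exploit the additional constraint that local inner-ness places on the homomorphism $g\mapsto\lambda_g$ encoded in a central automorphism. First, write $\Lambda(g) = \lambda_g g$ with $\lambda_g\in Z(G)$. Since $\Lambda$ is a group homomorphism, so is $\lambda\colon G\to Z(G)$; and because $\Lambda$ fixes $E=[G,G]$ pointwise (by the very definition of a central automorphism, already recorded in Section~\ref{sec:Chev}, item~5), $\lambda$ vanishes on $E$ and factors through $G/E\cong T/H$. Thus the lemma reduces to showing $\lambda_t = 1$ for every $t\in T$.

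To pin down $\lambda_t$, fix $t\in T$ and form the auxiliary element $g := x_{\alpha_1}(1)\cdots x_{\alpha_\ell}(1)\,t$. Since $\Lambda$ fixes each $x_{\alpha_i}(1)\in E$ and $\lambda_t$ is central, one computes $\Lambda(g) = \lambda_t g$; local inner-ness then supplies an $x\in G(R)$ with $xgx^{-1} = \lambda_t g$. The main technical step is to repeat, with only cosmetic modifications, the Gauss/Bruhat reduction carried out in Lemma~\ref{lem:diag}: reducing modulo $\Rad R$ and invoking the Bruhat decomposition $\bar x = t_0 u_1\mathbf{w} u_2$ over the residue field, the fact that both $\bar g$ and $\bar\lambda_t\bar g$ lie in the Borel $B = TU$ forces, by the same tracking of positive- versus negative-root contributions, $\mathbf{w} = e$ and $u_2 = e$; lifting back to $R$ and applying the commutator bookkeeping used there to exclude the $V$-component, one arrives at $x = t_0 u$ with $t_0\in T(R)$, $u\in U(R)$.

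Once $x\in B$, the conclusion is immediate: since $T$ normalizes $U$ and is abelian, a direct expansion shows $(t_0 u)g(t_0 u)^{-1}\in U\cdot t$, so its $T$-component in the (unique) $UT$-decomposition of $B$ is exactly $t$; meanwhile $\lambda_t g = x_{\alpha_1}(1)\cdots x_{\alpha_\ell}(1)\cdot\lambda_t t$ has $T$-component $\lambda_t t$, and uniqueness of the $UT$-decomposition forces $\lambda_t = 1$. The main obstacle is the Gauss/Bruhat step: because $\bar g$ now lies in the full Borel (rather than in $U$, as in Lemma~\ref{lem:diag}) and the right-hand side carries the extra central factor $\bar\lambda_t\in T$, one must verify that this additional $T$-contribution does not spoil the root-position tracking used to eliminate $\mathbf{w}$ and the $V$-component.
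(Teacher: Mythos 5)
Your proposal is correct and follows essentially the same route as the paper's proof: both test $\Lambda$ on a Borel element of the form (torus element)$\cdot x_{\alpha_1}(1)\cdots x_{\alpha_\ell}(1)$, use the Gauss/Bruhat reduction from Lemma~\ref{lem:diag} to force the conjugating element into $TU$, and then compare $T$-components to conclude $\lambda_t=1$. The only differences are cosmetic (a direct argument via the homomorphism $\lambda\colon G\to Z(G)$ factoring through $G/E$ versus the paper's argument by contradiction using $G=TE$).
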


\begin{proof}
Recall that any central automorphism of $G$ is identical on the elementary subgroup $E$ since $E$ is the commutator subgroup of $G$.
By the way of contradiction, assume that $\Lambda$ is not identical on $G$.
Since for local rings $G=TE$, there exists $z\in T$ such that $\Lambda(z)=\lambda \cdot z=:z_1\ne z$.

Similarly to the previous case, take $g=z\cdot x_{\alpha_1}(1)\dots x_{\alpha_\ell}(1)$. Then
$\Lambda(g)=z_1\cdot x_{\alpha_1}(1)\dots x_{\alpha_\ell}(1)$. As $\Lambda$ is locally inner, $g$ and $\Lambda(g)$
are conjugated by $x=tu_1vu_2$. Then we get $v=u_2=e$, so that $z^{tu_1}= z\cdot x_{\alpha_1}(a_1)\dots x_{\alpha_m}(a_m)\ne z_1\cdot x_{\alpha_1}(1)\dots x_{\alpha_\ell}(1)$.
The obtained contradiction shows that $z=z_1$, hence $\Lambda$ is identical.
\end{proof}

\medskip


Theorem~A now follows from Lemmas \ref{lem:comp} and \ref{lem:ring}--\ref{lem:cent}. \qed

\section{Concluding remarks} \label{sec:final}

In this section we give a very brief overview of eventual generalizations of Theorem~A.

$\bullet$ The first natural move is to consider the items of Conjecture \ref{conj:main}
untouched by Theorem~A. We believe that the groups of rank $\geqslant 2$ mentioned there are \Sha-rigid. To
establish this property, in the relevant cases one could try using the descriptions of automorphisms
due to Borel and Tits~\cite{BT}, Weisfeiler \cite{We1}, \cite{We2}, Caprace~\cite{Ca}; a recent paper~\cite{BuGv}
will hopefully be useful as well. We are much less sure regarding groups of rank~$1$ where the presence of
non-standard automorphisms makes the problem far more tricky.

$\bullet$ Following the Gelfand principle (quoted from \cite{BR}), ``noncommutative algebra (properly understood)
is simpler than its commutative counterpart''. Thus the next step could be to consider linear groups of rank $> 1$ over
associative noncommutative rings. In such a set-up, one could try using the available description of automorphisms,
see \cite{GM}, \cite{Ze}, \cite{ABM}. Again, the case of groups of rank 1 requires much more efforts.
One should not be overoptimistic regarding \Sha-rigidity for the following reason. For any associative ring $R$
one can define locally inner endomorphisms and $\Sha (R)$ similarly to the group case and show that some rings
admit non-inner locally inner endomorphisms (actually, even automorphisms), see \cite{KO}. Any such endomorphism of~$R$
induces a non-inner locally inner endomorphism of $G(R)$, which leads to examples of $G$ with non-trivial $\Sha (G)$.
The most optimistic expectations can be formulated as follows: $G(R)$ is \Sha-rigid if and only if so is $R$.
(One can expect the same for the co-Hopfian property, in the spirit of Proposition \ref{prop:co-Hopf}.)
In such context, it would be interesting to understand the relationship between $\Sha (G(R))$ and $\Sha (R)$.
Ideally, one can hope to get a bijection between two sets, expressing the following reduction principle:
the group $G(R)$ is as $\Sha$-rigid as the ring~$R$. See \cite{SeTe}, \cite{Bu-bi} and~\cite{BuGv} for other instances of
a similar reduction principle.

$\bullet$ Refuting Conjecture \ref{conj:main} by exhibiting examples of Chevalley groups which are not \Sha-rigid
would perhaps be even more interesting. Some convincing instances in different contexts are listed below:

\begin{itemize}
\item[{$\bullet$}] Negative solution of Higman's isomorphism problem for integral group rings of finite groups, due to Hertweck \cite{He}, is based
on using groups $G$ with nontrivial $\Sha (G)$.
\item[{$\bullet$}] $\Sha (G)$ naturally appears
as a part of certain cohomology of Hopf algebras (called lazy cohomology,
see the paper of Guillot and Kassel \cite{GK}, or invariant
cohomology, see the paper of Etingof and Gelaki \cite{EtGe}).
\item[{$\bullet$}] For a finite group $G$, $\Sha (G)$ is related to another invariant with the origin
in mathematical physics (the so-called group of braided tensor autoequivalences of the Drinfeld centre of $G$).
This relationship was discovered by Alexei Davydov \cite{Da1}, \cite{Da2}, and each example of $G$ with
non-trivial $\Sha(G)$ may have conceptually important consequences.
\end{itemize}

\medskip

\noindent{\it {Acknowledgements.}} We thank Eugene Plotkin for his interest in this work and useful discussions.


\end{document}